\documentclass[a4paper,12pt]{amsart}

\usepackage{fancybox}
\usepackage{amsmath,amssymb,amsthm}  	
\usepackage{color}
\usepackage{nicefrac}
\usepackage{mathrsfs}
\usepackage[margin=2.5cm]{geometry}

\usepackage{tikz-cd}
\usetikzlibrary{cd}

\newcommand{\drlabel}[1]{\label{#1}}

\def \C{\mathbb C}
\def \N{\mathbb N}

\newcommand*{\faktor}[2]{
  \raisebox{0.25\height}{\ensuremath{#1}}
  \mkern-3.5mu\diagup\mkern-2mu
  \raisebox{-0.25\height}{\ensuremath{#2}}
}

\theoremstyle{plain}
\newtheorem{satz}{Theorem}[section]

\newtheorem{lemma}[satz]{Lemma}
\newtheorem{corollary}[satz]{Corollary}
\newtheorem{proposition}[satz]{Proposition}

\theoremstyle{definition}
\newtheorem{beispiel}[satz]{Example}

\newtheorem{definition}[satz]{Definition}
\newtheorem{notiz}[satz]{Remark}

\usepackage{hyperref}		

\title{Equivalence of Curve Singularities and 
delta-Invariants}

\subjclass[2020]{14H20, 14B05, 13H15, 13J10}

\author{Reinhold H\"ubl}
\email{Reinhold.Huebl@dhbw.de}
\address{Zentrum f\"ur mathematisch-naturwissenschaftliches Basiswissen,
DHBW Mannheim, 68163 Mannheim, Germany}

\author{Irena Swanson}
\email{iswanso@purdue.edu}
\address{Department of Mathematics, Purdue University, 150 N. University Street,
West Lafayette, IN 47907-2067}

\date{\today}

\begin{document}
\allowdisplaybreaks
\belowdisplayskip=4pt
\abovedisplayskip=7pt

\begin{abstract}
We prove that if two parameterizations of a complete 
reduced noetherian curve over an algebraically closed 
field agree modulo a sufficiently 
large power of the maximal ideal,
then the two parameterizations are equivalent.
This strengthens some bounds from Greuel and 
Pfister \cite{GP}. In addition, we prove that if 
two reduced and irreducible curve singularities
are isomorphic modulo sufficiently high (and identical) 
powers of their respective maximal ideals,
then the completions of the two curves are isomorphic,
and the isomorphism of the full completions
agrees with the original isomorphism modulo some lower 
power of the maximal ideals.
We provide a new and better bound on the lower power,
strengthening the bound in Hironaka~\cite[Theorem B]{Hi}.
\end{abstract}

\maketitle

\section{Introduction}\drlabel{sect:intro}

Studying local curve singularities and their
classification has a long tradition, going back to
Zariski (cf.~\cite{Za}) and beyond.

A major breakthrough in understanding curve singularities
was obtained by Hironaka in~\cite{Hi} who showed that if
two curve singularities are isomorphic up to a
sufficiently high power of the maximal ideal
(depending on the delta--invariant of the singularity),
then the completions of the local rings are isomorphic
via an isomorphism which agrees with the original one up
to a suitable power of the maximal ideal. A different
approach was pursued by Greuel and Pfister~\cite{GP},
resp. Nguyen~\cite{Ng13}, who looked at parameterizations
$\varphi: k[[X_1, \ldots, X_n]] \longrightarrow R$ of
(complete) curve singularities and developed criteria,
depending on the delta--invariant as well, when two
parameterizations are equivalent (in a suitable sense
that will be explained in Definitions~\ref{def:equivdet}
and~\ref{def:equivdetno}),
depending on their behavior modulo high powers of the
maximal ideal.

Our motivation to approach these problems stems
from~\cite{HHMM} which established a close relation 
between the value
semigroup of a complete reduced and irreducible curve
singularity and its minimal generators.
These results and other basic facts will be covered in
Section~\ref{sect:facts}. We use them to improve for
reduced and analytically irreducible
curve singularities both the bounds of Greuel and Pfister
(in section~\ref{sect:determinacy})
and of Hironaka (in section~\ref{sect:Hironaka}).

The research for this paper was done while the first
author was visiting Purdue University. He expresses his
gratitude to this institution for its support and
hospitality.

\section{Definitions and known facts \label{sect:facts}}

Throughout $k$ is a field and $(R, \mathfrak m, k)$ is a
reduced, equicharacteristic noetherian curve singularity
with $k \subseteq R$ and with normalization $\overline{R}$.
We also assume that $\faktor{R}{\mathfrak m} = k 
= \faktor{\overline{R}}{\overline{\mathfrak m}}\,$ 
for every maximal ideal $\overline{\mathfrak m}$ of
$\overline R$ (which certainly is the case if $k$
is algebraically closed).

Except for the last part of Section~\ref{sect:determinacy},
we restrict to unibranch curve singularities, i.e.,
we assume that $\overline{R}$ is local as well, hence
it is a regular local ring of dimension~$1$ with a unique
maximal ideal $\overline{\mathfrak m}$.
Then $R$ is an analytically irreducible domain.

A crucial tool to understand curve singularities is the
singularity degree or delta--invariant.

\begin{definition} (\cite{GP})
When $\faktor{R}{\mathfrak m}$ is a finite-dimensional 
$k$-vector
space and $\overline{R}/R$ is module-finite, then the
\textbf{delta-invariant} of $R$ (with respect to $k$) is
	$$ \delta_k(R) = \mathrm{dim}_k\left(
    \faktor{\overline{R}}{R}\right) $$
and the \textbf{conductor degree} of $R$ (with
respect to $k$) is
	$$ c_k(R) = \mathrm{dim}_k\left(
    \faktor{\overline{R}}{\mathfrak C_R}\right), $$
where $\mathfrak C_R = \mathrm{Ann}_R(\overline{R}/R)$
is the conductor of~$R$, and it is an ideal both in
$R$ and in $\overline{R}$. As $k$ is fixed in our context,
we may drop the subscript~$\phantom{}_k$.
\end{definition}

These invariants and their relations to other invariants
of curve singularities have been studied extensively over the last decades,
c.f.~\cite{Hi},~\cite{Te}, \cite{Ca}, \cite{Ku} or~\cite{GP},
and they hold much numerical information about the singularity.

The unibranch curve singularity assumption implies that
$\delta(R) = \delta_k(R) = \delta_k(\widehat{R}) 
= \delta(\widehat{R})$
and that
$c(R) = c_k(R) = c_k(\widehat{R}) = c(\widehat{R})$.
Since these invariants of interest pass to completions,
we may assume that $R$ is a complete local domain. Then
$\overline{R} = k[[t]]$ for some regular parameter~$t$
of~$\overline{R}$, and $R \subseteq \overline{R}$ may 
be described by an (irreducible) parameterization
	$$ \varphi : P = k[[X_1, \ldots, X_N]] 
    \longrightarrow \overline{R}, $$
i.e., $R = k[[x_1, \ldots, x_n]] \subseteq \overline{R}$ 
with $x_i = \varphi(X_i) \in \overline{\mathfrak m}$.
Let $v$ be the $t$-adic valuation on $\overline{R}$, let
	$$ V = V(R) = \{ v(r) \, \vert \,\, 
    r \in R \setminus \{ 0 \} \} $$
be the value semigroup of $R$, and let $g=g(V)$ be
its genus and $c=c(V)$ its conductor.
For a fractional $R$-ideal $I$ we set
	$$ V(I) = \{ v(r) \, \vert \,\, r \in I 
    \setminus \{ 0 \} \}. $$

\begin{lemma}\drlabel{lem:delta-genus}
	$\delta(R) = g(V), \quad c(R) = c(V)$.
\end{lemma}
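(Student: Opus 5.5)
The plan is to compare the $k$-dimensions of subquotients of $\overline{R}=k[[t]]$ with counts of elements of $\mathbb N$ lying outside the value semigroup. The basic tool is the following standard fact. For fractional $R$-ideals (or $k$-subspaces closed under the relevant operations) $J \subseteq I \subseteq \overline{R}$ with $\dim_k I/J<\infty$, we have
$$\dim_k \faktor{I}{J} = \# \bigl(V(I)\setminus V(J)\bigr).$$
We prove this by filtering with the $t$-adic filtration $I_m = I\cap t^m\overline{R}$. Each graded piece $I_m/I_{m+1}$ injects into $t^m\overline R/t^{m+1}\overline R\cong k$, which is where the hypothesis that the residue field of $\overline R$ is $k$ enters. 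So $\dim_k I_m/I_{m+1}$ is $1$ or $0$ according to whether $m\in V(I)$ or not. To handle $J$, we use that $J_m/J_{m+1}\to I_m/I_{m+1}$ is injective. Hence $\dim_k (I_m+J)/(I_{m+1}+J)$ equals $1$ exactly when $m\in V(I)\setminus V(J)$, since $m\in V(J)$ means some element of $J$ has leading term at $t^m$, which kills that graded piece.

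Since the dimension is finite, $J$ contains $t^M\overline R\cap I$ for $M$ large. The filtration is therefore finite modulo $J$, and summing the graded pieces gives the formula. For $I\supseteq t^M\overline R$ we can simply note that all elements of valuation at least $M$ lie in both ideals.

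Next we apply the formula with $I=\overline R$, $J=R$. Here $V(\overline R)=\mathbb N$. Module-finiteness of $\overline R$ over $R$ gives a nonzero conductor, so $t^M\overline R\subseteq R$ for some $M$ and the quotient is finite-dimensional. We get $\delta(R)=\#(\mathbb N\setminus V)=g(V)$.

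For the conductor we take $I=\overline R$, $J=\mathfrak C_R$. It then suffices to show $V(\mathfrak C_R)=\{m : m\ge c(V)\}$. First, $\mathfrak C_R$ is an ideal of $\overline R$, hence equal to $t^{m_0}\overline R$ for some $m_0$, so $V(\mathfrak C_R)=[m_0,\infty)$. Since $t^{m_0}\overline R\subseteq R$, every integer at least $m_0$ lies in $V$, so $m_0\ge c(V)$.

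Conversely, we show $t^{c}\overline R\subseteq R$ with $c=c(V)$. Given $f\in t^c\overline R$, we successively subtract elements $r_i\in R$ with $v(r_i)=v(f-r_1-\dots-r_{i-1})$, matching leading coefficients. This is possible because every value at least $c$ is in $V$ and the residue field is $k$, so leading coefficients can be scaled. The partial sums converge $t$-adically, and the limit lies in $R$ because $R$ is complete and closed in $\overline R$: the $t$-adic and $\mathfrak m$-adic topologies agree on $R$ since $\overline R$ is finite over $R$. Hence $f\in R$, so $m_0=c$ and $c(R)=\dim_k \overline R/t^c\overline R=c=c(V)$.

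The main obstacle is this approximation step in the conductor part. It needs completeness of $R$ and the comparison of topologies, which is why we reduced to the complete case beforehand. Everything else is bookkeeping with the valuation filtration.
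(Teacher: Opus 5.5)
Your proof is correct and follows the same route as the paper, which simply quotes from Herzog--Kunz the length formula $\dim_k I/J=\#\bigl(V(I)\setminus V(J)\bigr)$ for fractional ideals and the identity $\mathfrak C_R=t^{c(V)}\overline R$; you prove both directly, via the $t$-adic filtration and successive approximation. One small simplification: you have already shown $t^{m_0}\overline R=\mathfrak C_R\subseteq R$, so your approximation of $f\in t^{c(V)}\overline R$ reaches $t^{m_0}\overline R$ after at most $m_0-c(V)$ steps, and completeness of $R$ and the comparison of topologies are not actually needed.
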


\begin{proof}
Viewing $\overline{R}$ as a fractional $R$-ideal we obtain
from~\cite[2.9]{HK} that
	$$ \delta(R) = \mathrm{dim}_k\left(
    \faktor{\overline{R}}{R}\right)
	= \vert V(\overline{R}) \setminus V(R) \vert = g(V). $$
By~\cite[2.4]{HK},
	$\mathfrak C_R = t^{c(V)} \cdot \overline{R}$,
so the argument for the conductor degree is similar.
\end{proof}

\begin{notiz}\drlabel{rem:genus-delta}
From Lemma~\ref{lem:delta-genus} we conclude immediately
that
	$$ \delta(R) \leq c(R) \leq 2 \cdot \delta(R) $$
as $\frac {1}{2} \cdot c(V) \leq g(V) \leq c(V)$
(cf.~\cite[Lemma 2.14]{RGS}).
\end{notiz}

\begin{definition}
For an $\mathfrak m$--primary ideal $I$ in a noetherian
local ring $(R,\mathfrak m,k)$, the {\bf Hilbert 
function of $I$} is
	$$ H_I(n) = \mathrm{dim}_k 
    \left(\faktor{I^n}{I^{n+1}}\right). $$
When $R$ is one-dimensional,
then $H_I(n)$ is constant for all sufficiently large $n$,
and in fact it equals the {\bf multiplicity} $e(I; R)$ 
of $I$ for all $n \geq r(I)$, the {\bf reduction number} of~$I$. 
When $I = \mathfrak m$, we also write $H_R$ 
for $H_I$ and $e(R)$ for $e(\mathfrak m; R)$ (the 
multiplicity of $R$).
\end{definition}

\begin{notiz}\drlabel{rem:ered}
Let $R$ be a noetherian local ring,
$I$ a proper ideal in $R$
and $x \in I$ a non-zerodivisor.
By Nakayama's Lemma, $(x)$ is a 
reduction of $I$ with reduction number at most $r$
if and only if the $x$-multiplication
	$$ \mu_x : \faktor{I^n}{I^{n+1}} \longrightarrow 
    \faktor{I^{n+1}}{I^{n+2}} $$
is an isomorphism for all $n \ge r$,
equivalently, for $n = r$.
\end{notiz}

The main result of~\cite{HHMM} is the following.

\begin{satz}\drlabel{thm:exist-unique} {\rm (\cite{HHMM})}
In the above situation there exist unique integers 
$1 \leq a_1 < a_2 < \cdots < a_n$ with the 
following properties:
\begin{enumerate}
\item $n = \mathrm{edim}(R)$.
\item If $x_1, \ldots, x_n \in R$ are elements satisfying 
$v(x_i) = a_i$ for all $i$, then
	$R = k[[x_1, \ldots, x_n]]$.
\end{enumerate}
The set $\{a_1, \ldots, a_n\}$ equals
	$V(\mathfrak m) \setminus V(\mathfrak m^2)$.
\end{satz}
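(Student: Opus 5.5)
We would prove the theorem by setting $\{a_1,\ldots,a_n\} := V(\mathfrak m)\setminus V(\mathfrak m^2)$, listed in increasing order, and checking that this set has properties (1) and (2). Uniqueness then comes for free, since the $a_i$ are recovered as the values of any minimal generating system. The central tool is the following standard fact for a complete $R$ with $\overline R = k[[t]]$ and residue field $k$, where $I \supseteq J$ are fractional ideals with $\dim_k I/J<\infty$:
$$\dim_k I/J = |V(I)\setminus V(J)|,$$
as in \cite[2.9]{HK}, which was already used in Lemma~\ref{lem:delta-genus}. Applying this with $I=\mathfrak m$ and $J=\mathfrak m^2$ gives
$$n_0:=|V(\mathfrak m)\setminus V(\mathfrak m^2)| = \dim_k \mathfrak m/\mathfrak m^2 = \mathrm{edim}(R).$$
This proves (1) with $n = n_0$.

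For (2), take $x_1,\ldots,x_n\in R$ with $v(x_i)=a_i$. The first step is to show that their images in $\mathfrak m/\mathfrak m^2$ are $k$-linearly independent. Suppose $y=\sum \lambda_i x_i\in\mathfrak m^2$ with not all $\lambda_i$ zero. Since the values $a_i$ are distinct, $v(y)=a_j$ where $j$ is the smallest index with $\lambda_j\neq0$. But then $a_j = v(y) \in V(\mathfrak m^2)$, contradicting $a_j\notin V(\mathfrak m^2)$. So the images are independent, and since there are $n=\dim_k\mathfrak m/\mathfrak m^2$ of them, they form a basis. By Nakayama's lemma the $x_i$ generate $\mathfrak m$. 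Finally, $R$ is complete and $k\subseteq R$ maps isomorphically onto the residue field, so by the Cohen structure theorem (successive approximation) $R=k[[x_1,\ldots,x_n]]$.

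It remains to justify the counting fact $\dim_k I/J=|V(I)\setminus V(J)|$; this is the main point, and we would either cite \cite{HK} or argue as follows. Consider the filtration $I_s=\{r\in I: v(r)\ge s\}\cup\{0\}$. Each quotient $I_s/I_{s+1}$ has dimension at most one, because two elements with the same value $s$ have a $k$-linear combination of higher value; here we use that the residue field of $\overline R$ is $k$. The same holds for $J$, with the analogous filtration $J_s$. Moreover $I_s$ and $J_s$ coincide for $s\gg0$, since $J\supseteq t^{c'}\overline R$ for some $c'$: $J$ contains a power of the conductor because $\dim_k I/J<\infty$. Comparing the graded pieces of the two filtrations then shows that each $s\in V(I)\setminus V(J)$ contributes exactly one dimension to $I/J$, and no other $s$ contributes, which gives the count.
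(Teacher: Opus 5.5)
Your existence argument is correct. The count $\dim_k \mathfrak m/\mathfrak m^2 = |V(\mathfrak m)\setminus V(\mathfrak m^2)|$ gives (1); your filtration sketch of the Herzog--Kunz formula is fine. The distinct-values argument, Nakayama, and completeness give (2). The paper itself gives no proof of this statement; it quotes it from HHMM.

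The gap is uniqueness, which you dismiss in one line with a false justification. The values of a minimal generating system of $\mathfrak m$ do not recover $V(\mathfrak m)\setminus V(\mathfrak m^2)$, even when they are pairwise distinct. In $R=k[[t^3,t^7]]$, the elements $t^3$ and $t^6+t^7$ minimally generate $\mathfrak m$, since $t^6=(t^3)^2\in\mathfrak m^2$. Their values are $3,6$, while $V(\mathfrak m)\setminus V(\mathfrak m^2)=\{3,7\}$.

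What you actually need is that any other sequence $b_1<\cdots<b_n$ satisfying (1) and (2) avoids $V(\mathfrak m^2)$. This requires exploiting the freedom of choice in (2):
\begin{itemize}
\item Suppose $b_i\in V(\mathfrak m^2)$. Choose $x_i\in\mathfrak m^2$ with $v(x_i)=b_i$, and arbitrary $x_l\in R$ with $v(x_l)=b_l$ for $l\neq i$.
\item Then (2) gives $R=k[[x_1,\ldots,x_n]]$ with all $x_l\in\mathfrak m$, so $\mathfrak m=(x_1,\ldots,x_n)$.
\item Hence $\mathfrak m/\mathfrak m^2$ is spanned by $n-1$ elements, contradicting (1).
\end{itemize}
So $\{b_1,\ldots,b_n\}\subseteq V(\mathfrak m)\setminus V(\mathfrak m^2)$, and equality follows from your count.

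This argument uses $b_l\in V(R)$ for all $l$, and that has to be read into (2). Otherwise (2) holds vacuously for any sequence containing a gap of $V$. For example, $1<2$ satisfies (1) and (2) literally for $k[[t^2,t^3]]$, so uniqueness would simply be false.
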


\begin{definition}
The sequence $a_1 < a_2 < \cdots < a_n$ as in the last
theorem is called \textbf{Herzog--Kunz sequence} of $R$,
and elements $x_1, \ldots, x_n \in R$ with $v(x_i) = a_i$
are called \textbf{Herzog--Kunz generators} of $R$.
A parameterization
	$$ \varphi:P = k[[X_1,\ldots, X_n]] 
    \longrightarrow \overline{R} $$
with $v(\varphi(X_i)) = a_i$ is called a
\textbf{Herzog--Kunz parameterization} of~$R$.
\end{definition}

In~\cite{GP} Greuel and Pfister considered more generally
parameterizations
    $$ \varpi: k [[X_1, \ldots, X_N]] 
    \longrightarrow \widetilde{R} $$
of curve singularities with $r$ branches, where
$\widetilde{R} = k[[t_1]] \times \cdots \times k[[t_r]]$
(cf.~\cite[Definition 7]{GP}), and they call such a
parameterization {\bf primitive} if $\widetilde{R} =
\overline{R}$ is the normalization of $R =
\operatorname{im}(\varphi)$. Furthermore, they defined:

\begin{definition}\drlabel{def:equivdet}
Let $ \varphi, \psi : P = k[[X_1, \ldots, X_N]]
\longrightarrow \widetilde{R}$ be two parameterizations
(of possibly different curves).
Then $\varphi$ and $\psi$
are called \textbf{right-left-equivalent} or
$\mathbf{\mathcal{A}\text{-equivalent}}$
($\varphi \sim_{\mathcal{A}} \psi$) if there
exist $\sigma \in \mathrm{Aut}_k(P)$ and
$\tau \in \mathrm{Aut}_k(\widetilde{R})$ such
that $\tau \circ \varphi \circ \sigma^{-1} = \psi$.

It $\tau = \mathrm{id}_{\widetilde{R}}$, then $\varphi$
and $\psi$ are called \textbf{left-equivalent} or
$\mathbf{\mathcal{L}\text{-equivalent}}$
($\varphi \sim_{\mathcal{L}} \psi$).

\end{definition}

\begin{definition}\drlabel{def:equivdetno}
A parameterization $\varphi : P = k[[X_1, \ldots, X_N]]
\longrightarrow \widetilde{R}$ is called
$j$-$\mathcal{A}$-\textbf{determined}
(resp. $j$-$\mathcal{L}$-\textbf{determined})
if any parameterization $\psi : P = k[[X_1, \ldots, X_N]]
\longrightarrow \widetilde{R}$
with $\varphi = \psi \pmod{\overline{\mathfrak m}^{j+1}}$
is $\mathbf{\mathcal{A}\text{-equivalent}}$ to $\varphi$
(resp. $\mathbf{\mathcal{L}\text{-equivalent}}$ 
to $\varphi$).
\end{definition}

\section{Bounds on determinacy in delta\label{sect:determinacy}}

In this section we improve on some determinacy bounds from
Greuel and Pfister~\cite{GP}, focusing on showing that if
two primitive parameterizations of a curve are the same
modulo a sufficiently high power of the maximal ideal,
then the two curves are actually isomorphic.

Theorem~\ref{thm:det-cond}
is a strengthening of \cite[Proposition 11]{GP},
and Corollary~\ref{cor:det-delta}
is a strengthening of \cite[Proposition 10]{GP}
for unibranch curve singularities.
The end of the section contains also a strengthening
of \cite[Proposition 11]{GP} for multi-branched curves.
We present examples where our bounds are sharp.

Greuel and Pfister proved in \cite[Proposition 11]{GP}
that any primitive parameterization $\varphi$ of a curve
singularity is $(2 c(R) - 1)$-determined. As $a_n+1 \leq
c(R) +e(R) \leq 2 \cdot c(R)$, where $e(R) =
e(\mathfrak m; R)$ denotes the multiplicity of $R$,
the following is a strengthening for unibranch curves.

\begin{satz}\drlabel{thm:det-cond}
Let $R$ be a unibranch complete curve with conductor
degree $c(R)$ and with Herzog-Kunz sequence
$a_1 < a_2 < \cdots < a_n$ and set
	$$ d_R = \mathrm{max}\{ c(R), a_n+1 \}. $$
Then any primitive parameterization $\varphi :
P = k[[X_1, \ldots, X_N]]
\longrightarrow \overline{R} = k[[t]]$ is
$(d_R{-}1)$-$\mathcal{L}$-determined (hence
$(d_R{-}1)$-$\mathcal{A}$-determined).
\end{satz}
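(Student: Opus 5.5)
Fix a primitive parameterization $\varphi$ with image $R=k[[x_1,\dots,x_N]]\subseteq k[[t]]$, $x_i=\varphi(X_i)$, and let $\psi$ be a parameterization with $y_i=\psi(X_i)$ and $y_i\equiv x_i \pmod{t^{d_R}}$ for all $i$. The plan is to show that $\psi$ has the same image $R$, and then to obtain $\sigma\in\mathrm{Aut}_k(P)$ with $\psi=\varphi\circ\sigma$, directly from the surjectivity of $\varphi$ and $\psi$ onto $R$.

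\textbf{Step 1: $S:=\operatorname{im}\psi\subseteq R$.} Since $d_R\ge c(R)$, Lemma~\ref{lem:delta-genus} gives $\mathfrak C_R=t^{c(V)}k[[t]]=t^{c(R)}k[[t]]\subseteq R$. Hence each difference $y_i-x_i$ lies in $t^{d_R}k[[t]]\subseteq \mathfrak C_R\subseteq \mathfrak m$, so $y_i\in R$. Since $R$ is complete, $S\subseteq R$.

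\textbf{Step 2: $S=R$.} Choose elements $z_1,\dots,z_n\in R$ that are Herzog--Kunz generators, so $v(z_j)=a_j$, and write $z_j=F_j(x_1,\dots,x_N)$ for power series $F_j\in P$ without constant term. Set $w_j=F_j(y_1,\dots,y_N)\in S$. Because each $y_i-x_i\in t^{d_R}k[[t]]$ and every $x_i,y_i\in\overline{\mathfrak m}$, a Taylor expansion gives $w_j-z_j\in t^{d_R+1}k[[t]]$, and certainly $w_j-z_j\in t^{d_R}k[[t]]$. Since $d_R\ge a_n+1>a_j$, we get $v(w_j)=a_j$. By Theorem~\ref{thm:exist-unique}(2), the $w_j\in R$ generate $R$, that is, $R=k[[w_1,\dots,w_n]]\subseteq S$. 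Combined with Step 1, this gives $S=R$. This step is where the hypothesis $d_R\ge a_n+1$ is used.

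\textbf{Step 3: lifting to an automorphism.} Both $\varphi,\psi:P\to R$ are surjective. The difference $\psi-\varphi$ takes $X_i$ into $t^{d_R}k[[t]]\subseteq \mathfrak C_R\subseteq \mathfrak m$. The aim is an automorphism $\sigma$ with $\varphi\circ\sigma=\psi$. First pick $G_i\in P$ with $\varphi(G_i)=y_i$, and choose it of the form $G_i=X_i+H_i$, where $\varphi(H_i)=y_i-x_i$. Then set $\sigma(X_i)=G_i$, so that $\varphi\circ\sigma=\psi$ automatically.

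The main obstacle is ensuring that $\sigma$ is invertible, which requires the linear parts of the $H_i$ to form a matrix $L$ with $\mathrm{id}+L$ invertible, or more simply $H_i\in(X)^2$. First I would show that $y_i-x_i\in t^{d_R}k[[t]]$ can be written as $\varphi(H_i)$ with $H_i\in(X)^2$. Since $c(R)\ge 1$, we have $d_R\ge a_n+1$, and one must argue that $t^{d_R}k[[t]]\subseteq \varphi((X)^2)=\mathfrak m^2$. Every value $\ge d_R\ge c$ lies in $V$. A value $\ge a_n+1$ that lies in $V(\mathfrak m)$ is not among $a_1,\dots,a_n$, so by Theorem~\ref{thm:exist-unique} it lies in $V(\mathfrak m^2)$.

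Then run a successive approximation in the complete ring. Given $f\in t^{d_R}k[[t]]$, subtract an element of $\mathfrak m^2$ of the same value, which leaves a remainder of higher value, still $\ge d_R$. Iterating, the remainders tend to zero $t$-adically. The partial sums converge in $\mathfrak m^2$: the ideal $\mathfrak m^2$ is closed, and the $t$-adic and $\mathfrak m$-adic topologies on $R$ agree because $k[[t]]$ is finite over $R$. Hence $f\in\mathfrak m^2$.

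With $H_i\in(X)^2$, the map $\sigma$ has linear part equal to the identity, so $\sigma\in\mathrm{Aut}_k(P)$ and $\psi=\varphi\circ\sigma$. This gives $\mathcal L$-equivalence, which trivially implies $\mathcal A$-equivalence.
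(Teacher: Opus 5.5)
Your argument is correct and is essentially the paper's proof. Steps~1 and~3 match it: the differences lie in $t^{d_R}k[[t]]\subseteq \mathfrak C_R\cap\mathfrak m^2$, they lift to $H_i\in(X_1,\ldots,X_N)^2$, and $\sigma(X_i)=X_i+H_i$ is the required automorphism. The one difference is that the paper cites \cite[(3.1)]{HHMM} for the containment in $\mathfrak m^2$, while you rederive it from $V(\mathfrak m)\setminus V(\mathfrak m^2)=\{a_1,\ldots,a_n\}$ by successive approximation. Your Step~2 is unnecessary, since $\operatorname{im}\psi=R$ follows from $\psi=\varphi\circ\sigma$. Its side claim $w_j-z_j\in t^{d_R+1}k[[t]]$ is false when $F_j$ has a nonzero linear part, but you only use $w_j-z_j\in t^{d_R}k[[t]]$, which holds.
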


\begin{proof}
Let $\psi : P \longrightarrow \overline{R}$ be another
parameterization of a curve with
	$\varphi = \psi \pmod{\overline{\mathfrak m}^{d_R}}$.
Write $\varphi(X_i) = \psi(X_i) + h_i$
for some $h_i \in t^{d_R} \cdot \overline{R}$. As
	$v(h_i) \geq d_R \geq c(R)$,
we have that $h_i \in R$,
hence in particular $\psi(X_i) \in R$. As
	$v(h_i) \geq d_R > a_n$,
we deduce from~\cite[(3.1)]{HHMM} that
	$$ h_i \in \mathfrak m^2 , \qquad
	h_i = \sum\limits_{j=1}^N g_{i,j} \cdot \varphi(X_j)
    \quad \text{ for some }\, g_{i,j}\in \mathfrak m. $$
Let $G_{i,j} \in (X_1, \ldots, X_N) \cdot P$ be such that
	$g_{i,j} = G_{i,j}(\varphi(X_1), \ldots, \varphi(X_N))$.
Thus, setting $H_i = \sum\limits_{j=1}^n G_{i,j}
\cdot X_j$ we have
$h_i = H_i(\varphi(X_1), \ldots, \varphi(X_N))$,
hence
	$$ \varphi(X_i) - H_i(\varphi(X_1), \ldots,
    \varphi(X_N)) = \psi(X_i) $$
and therefore
	$$ \sigma : P \longrightarrow P, \quad X_i \mapsto X_i
    - H_i(X_1, \ldots, X_N) $$
is an automorphism of $P$ (as $H_i \in (X_1, \ldots,
X_N)^2 \cdot P$) with $\varphi \circ \sigma = \psi$,
proving the proposition.
\end{proof}

It is well-known that $a_1 = e(R)$ and that 
$a_1 - 1 \le \delta(R)$.

We thank Michael Hellus for the proof of the 
following lemma.

\begin{lemma}\drlabel{lem:an-delta}
Let $a_1 < \cdots < a_n$ be the Herzog--Kunz sequence 
of $R$. Then
	$$ a_n \leq 2 \cdot \delta(R) + 1. $$
\end{lemma}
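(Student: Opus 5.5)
The bound should follow from a counting argument on the value semigroup $V=V(R)$, using Lemma~\ref{lem:delta-genus} to identify $\delta(R)$ with the genus $g(V)$, that is, with the number of gaps $\mathbb{N}\setminus V$. By Theorem~\ref{thm:exist-unique}, $a_n\in V(\mathfrak m)\setminus V(\mathfrak m^2)$. So it suffices to show that any $s\in V(\mathfrak m)\setminus V(\mathfrak m^2)$ satisfies $s\le 2g(V)+1$.

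The key observation is the following. Let $s=a_n$ and let $j$ be an integer with $0<j<s$. Suppose both $j$ and $s-j$ lie in $V$. Pick $f,h\in R$ with $v(f)=j$ and $v(h)=s-j$. Since both values are positive, $f,h\in\mathfrak m$, because $R$ is local with residue field $k$ and any element of positive value is a nonunit. Then $fh\in\mathfrak m^2$ and $v(fh)=s$, which contradicts $s\notin V(\mathfrak m^2)$. Hence for every $0<j<s$, at least one of $j$ and $s-j$ is a gap of $V$.

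Now partition $\{1,\dots,s-1\}$ into the unordered pairs $\{j,s-j\}$. When $s$ is even, the middle element $s/2$ forms a singleton; the observation applied with $j=s/2$ shows that $s/2$ is itself a gap. There are $\lceil (s-1)/2\rceil$ such classes. They are disjoint, and each contains at least one gap. Therefore
$$ \delta(R)=g(V)\ \ge\ \left\lceil \tfrac{s-1}{2}\right\rceil\ \ge\ \tfrac{s-1}{2}, $$
which gives $a_n=s\le 2\delta(R)+1$.

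I do not expect a serious obstacle. The only point needing care is that elements of $R$ with positive value lie in $\mathfrak m$, so that the product really lies in $\mathfrak m^2$. This holds because $R$ is a local domain inside $\overline R=k[[t]]$, and units of $R$ are units of $\overline R$, hence have value $0$.
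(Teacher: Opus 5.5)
Your proof is correct and is essentially the paper's argument: both identify $\delta(R)$ with the genus $g(V)$ via Lemma~\ref{lem:delta-genus} and use the same pigeonhole count over the disjoint pairs $\{i,\lambda-i\}$, each of which must contain a gap. The only difference is the reduction step: you show directly that $a_n$ is not a sum of two positive elements of $V$ (a product of two elements of $\mathfrak m$ lies in $\mathfrak m^2$, and $a_n\notin V(\mathfrak m^2)$), whereas the paper quotes \cite[(2.5)]{HHMM} to place the $a_i$ among the minimal generators of $V$ and bounds the largest minimal generator.
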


\begin{proof}
Let $V$ 
be the value semigroup of $R$ with minimal generators
$b_1, \ldots, b_N$. Then
	$\{a_1, \ldots, a_n \} \subseteq \{ b_1, \ldots, b_N\}$
by~\cite[(2.5)]{HHMM}.5). Thus by
Lemma~\ref{lem:delta-genus} it suffices to show
	$b_N \leq 2 \cdot g(V) + 1$.

With $V_+ = V \setminus \{0\}$ we have
	$\{ b_1, \ldots, b_N \} = V_+ \setminus (V_+ + V_+)$,
hence it suffices to show that any $\lambda \in\N$ with
$\lambda \geq 2 \cdot g(V) + 2$ is contained in
$V_+ + V_+$. By Remark~\ref{rem:genus-delta},
$2 \cdot g(V) + 2 \geq c(V)$,
hence certainly $\lambda \in V_+$.

Let $t = \lfloor \frac {\lambda}{2} \rfloor$. Then $t \geq
g(V) + 1$, and we have $t$ pairwise distinct presentations
	$$ \begin{array} {l c l c l}
	\lambda & = & 1 & + & (\lambda - 1), \\
	\lambda & = & 2 & + & (\lambda - 2), \\
	& \vdots & & & \\
	\lambda & = & t & + & (\lambda - t).\\
	\end{array} $$
As $t > g(V)$ for at least one equation
	$\lambda = i + (\lambda - i)$,
neither $i$ nor $\lambda-i$ can be gaps of $V$, i.e.,
both $i$ and $\lambda - i$ are in $V_+$, proving the lemma.
\end{proof}

The following strengthens~\cite[Corollary 10]{GP} 
for unibranch curve singularities.

\begin{corollary}\drlabel{cor:det-delta}
Let $ \varphi : P=k[[X_1, \ldots, X_N]] \longrightarrow
\overline{R} $ be a parameterization of a
unibranch (complete) curve $R$ with
delta--invariant $\delta(R)$. Then $\varphi$ is
$(2 \cdot \delta(R) + 1)$-$\mathcal{L}$-determined
(hence $(2 \cdot \delta(R){+}1)$-$\mathcal{A}$-determined).
\end{corollary}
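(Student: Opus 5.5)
The plan is to derive the corollary directly from Theorem~\ref{thm:det-cond}. The idea is to show that $d_R - 1 \le 2\cdot\delta(R)+1$, and then use the fact that $j$-determinacy implies $j'$-determinacy for every $j' \ge j$. That monotonicity is immediate from Definition~\ref{def:equivdetno}: if $\varphi \equiv \psi \pmod{\overline{\mathfrak m}^{j'+1}}$ with $j' \ge j$, then also $\varphi \equiv \psi \pmod{\overline{\mathfrak m}^{j+1}}$, since $\overline{\mathfrak m}^{j'+1}\subseteq \overline{\mathfrak m}^{j+1}$. Hence it suffices to bound both terms in $d_R = \max\{c(R), a_n+1\}$ by $2\delta(R)+2$.

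The two bounds come from earlier results. Remark~\ref{rem:genus-delta} gives $c(R)\le 2\delta(R) \le 2\delta(R)+2$. Lemma~\ref{lem:an-delta} gives $a_n \le 2\delta(R)+1$, so $a_n+1\le 2\delta(R)+2$. Together these give $d_R-1 \le 2\delta(R)+1$. By Theorem~\ref{thm:det-cond}, $\varphi$ is therefore $(d_R-1)$-$\mathcal{L}$-determined, hence $(2\delta(R)+1)$-$\mathcal{L}$-determined. Since $\mathcal{L}$-equivalence is the special case $\tau = \mathrm{id}$ of $\mathcal{A}$-equivalence, $\mathcal{A}$-determinacy follows as well.

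The one point needing care is that Theorem~\ref{thm:det-cond} is stated for primitive parameterizations, and the corollary speaks of "a parameterization of a unibranch curve $R$". I would read the parameterization as mapping into the normalization $\overline{R}=k[[t]]$, as the notation $\varphi: P\to\overline{R}$ indicates, so it is primitive by definition. With that reading the corollary follows directly from the theorem, and I do not expect any real obstacle beyond this bookkeeping.
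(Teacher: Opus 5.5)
Your proposal is correct and follows essentially the same route as the paper: bound $c(R)\le 2\delta(R)$ by Remark~\ref{rem:genus-delta} and $a_n+1\le 2\delta(R)+2$ by Lemma~\ref{lem:an-delta}, then apply Theorem~\ref{thm:det-cond}. Your remarks on the monotonicity of determinacy and on reading the parameterization as primitive spell out bookkeeping that the paper leaves implicit.
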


\begin{proof}
By Remark~\ref{rem:genus-delta} we have $c(R) \leq 2
\cdot \delta(R)$, from Lemma~\ref{lem:an-delta} we deduce
that $a_n+1 \leq 2 \cdot \delta(R) + 2$, and then we apply
Theorem~\ref{thm:det-cond}.
\end{proof}

\begin{beispiel}\drlabel{ex:cor-sharp}
The bound in Corollary~\ref{cor:det-delta} is sharp in
some cases when the bound of~\cite[Proposition 11]{GP} is
not sharp, as in the following example of the
parameterization
$\varphi: k[[X_1, \ldots, X_5]] \longrightarrow k[[t]]$
given by
	$$ \varphi(X_1) = t^{5},\, \varphi(X_2) = t^{11},
    \, \varphi(X_3) = t^{17},
	\, \varphi(X_4) = t^{23}, \, \varphi(X_5) = t^{29}, $$
i.e.,
we consider the curve singularity
$R = k[[t^5, t^{11}, t^{17}, t^{23}, t^{29}]]$, which is a
numerical semigroup ring with maximal embedding dimension
and with Herzog--Kunz sequence $\{5, 11, 17, 23, 29\}$.
Its value semigroup is $V = \langle 5, 11, 17, 23, 29
\rangle$, its delta-invariant is
	$\delta(R) = g(V) = 14$,
and its conductor degree is~$c(R) = 25$.
Hence
	$a_5 = 29 = 2 \cdot \delta(R) + 1 > c(R)$
and $\varphi$ is $29$-$\mathcal{L}$-determined.
Since $2 \cdot c(R)-1 = 49 > 29$,
the bound in~\cite[Proposition 11]{GP} is not sharp.
We verify that $29$ is indeed sharp,
i.e., that $\varphi$ is not $28$-$\mathcal{L}$-determined.
Namely, for the parameterization
$\psi: k[[X_1, \ldots, X_5]] \longrightarrow k[[t]]$
given by
	$$ \psi(X_1) = t^{5}, \, \psi(X_2) = t^{11},
    \, \psi(X_3) = t^{17},
	\, \psi(X_4) = t^{23}, \, \psi(X_5) = t^{30} $$
we have that
	 $\varphi = \psi \pmod{\overline{\mathfrak m}^{29}}$,
but there is no $\sigma \in \mathrm{Aut}_k(P)$ with
$\psi = \varphi \circ \sigma^{-1}$, as
$\mathrm{im}(\psi) = k[[t^5, t^{11}, t^{17}, t^{23}]]
\subsetneq R.$
\end{beispiel}

\begin{beispiel}
The bounds in Theorem~\ref{thm:det-cond},
in Corollary~\ref{cor:det-delta},
and in
Greuel and Pfister~\cite[Proposition 11]{GP}
can be sharp simultaneously,
as in the parameterization
$\varphi: k[[X_1, \ldots, X_n]] \longrightarrow k[[t]]$
with $\varphi(X_i)=t^{n-1+i}$
of~\cite[Rem. 13]{GP}. Here
	$c(R) = n$,
	$a_n = 2n-1 = 2 \cdot c(R) - 1$,
	$\delta(R) = n-1$,
hence $d_R = 2n = 2 c(R) = 2 \delta(R) + 2$.
Thus $\varphi$ is
$(2n-1)$-$\mathcal{L}$-determined but obviously
not $(2n-k)$-$\mathcal{L}$-determined for any 
$k\geq 2$ as $\varphi(X_n) = 0 
\pmod{\overline{\mathfrak m}^{2n-k+1}}$.

\smallskip
In this context we should point out a misprint
in~\cite[Corollary 12]{GP}. The parameterization
$\varphi: k[[X_1, X_2]] \longrightarrow k[[t]]$ with
$\varphi(X_1)=t^{2}$, $\varphi(X_2) = t^3$ satisfies
$c(R) = 2$ and $\delta(R) = 1$, and $\varphi$ is
$(2 \cdot c(R)-1)= 3$-$\mathcal{L}$-determined
but obviously not
$(4 \cdot \delta(R)-2) = 2$-$\mathcal{L}$-determined.
The bound~\cite[Proposition 11]{GP} combined with
$c(R) \leq 2 \cdot \delta(R)$ only gives the bound
$4 \cdot \delta(R) - 1$.
\end{beispiel}

For multi-branched curves, a value-optimal presentation as
in the unibranched case is not available. Nevertheless,
some partial results can be obtained which recover the
bound of~\cite[Proposition 11]{GP}.
Assume for the rest of this section
that $R$ has $\rho \geq 2$ branches and let
$\mathrm{Min}(R) =
\{ \mathfrak p_1, \ldots, \mathfrak p_{\rho} \}$.
Set $R_i =\faktor{R}{\mathfrak p_i}$ and let
$\overline{R_i}$ be its normalization. Again
we assume that $\faktor{R}{\mathfrak m} = k =
\faktor{\overline{R_i}}{\overline{\mathfrak m_i}}$
for all $i$.
Set
	$$ \widetilde{R} = \prod\limits_{i=1}^{\rho} R_i, \quad
	\overline{R} = \prod\limits_{i=1}^{\rho}
    \overline{R_i} =\prod\limits_{i=1}^{\rho} k[[t_i]]. $$
Then $\overline{R}$ is the normalization of $R$ and the
inclusion $R\subseteq \overline{R}$ factors as
	$$ R \xrightarrow[\phantom{leer}]{\varepsilon} 
    \widetilde{R}
    \xrightarrow[\phantom{leer}]{\iota} \overline{R}. $$
For $i = 1, \ldots, \rho$ set $\mathfrak q_i =
\bigcap\limits_{j\neq i} \mathfrak p_j$.

\begin{lemma}
In the above situation
\begin{itemize}
\item[i)] The canonical map $R \longrightarrow R_i$
induces an inclusion $\mathfrak q_i \subseteq R_i$.
\item[ii)] The conductor of $\widetilde{R}/R$ is
$\mathfrak C_{\widetilde{R}/R} =
(\mathfrak q_1, \ldots, \mathfrak q_{\rho})
\subseteq \widetilde{R}$.
\end{itemize}
\end{lemma}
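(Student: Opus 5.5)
Since $R$ is reduced, $\mathfrak p_1 \cap \cdots \cap \mathfrak p_\rho = 0$, and both parts are elementary statements about this intersection. We use that $R \to \widetilde{R} = \prod_i R_i$ is the diagonal map $r \mapsto (r \bmod \mathfrak p_1, \ldots, r \bmod \mathfrak p_\rho)$, injective because the intersection of the minimal primes is zero.

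For i) we restrict the canonical surjection $R \to R_i = R/\mathfrak p_i$ to $\mathfrak q_i$. Its kernel is $\mathfrak q_i \cap \mathfrak p_i = \bigcap_j \mathfrak p_j = 0$, so $\mathfrak q_i$ maps isomorphically onto an ideal of $R_i$. Note also that $\mathfrak q_i \subseteq \mathfrak p_j$ for $j \neq i$. Hence under $\varepsilon$ an element $q \in \mathfrak q_i$ goes to $(0,\ldots,0,\bar q,0,\ldots,0)$ with $\bar q$ in the $i$-th slot. Thus $\varepsilon(\mathfrak q_i) = 0 \times \cdots \times \mathfrak q_i \times \cdots \times 0 \subseteq \widetilde{R}$, where $\mathfrak q_i$ is viewed inside $R_i$. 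This is the precise sense of the inclusion, and it will be used in ii).

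For ii), write $\mathfrak C = \mathrm{Ann}_R(\widetilde{R}/R) = \{ r \in R : r\widetilde{R} \subseteq R\}$, and let $e_i$ be the $i$-th idempotent of $\widetilde{R}$. Since $\widetilde{R} = \sum_i R e_i$, we have $r \in \mathfrak C$ iff $r e_i \in R$ for all $i$.

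First, $\mathfrak q_i \subseteq \mathfrak C$. For $q \in \mathfrak q_i$, part i) shows $\varepsilon(q) = (0,\ldots,\bar q,\ldots,0)$. Multiplying by any $(s_1,\ldots,s_\rho) \in \widetilde{R}$ gives $(0,\ldots,\bar q s_i,\ldots,0)$. Lift $s_i$ to some $s \in R$; then $qs \in \mathfrak q_i$ and $\varepsilon(qs) = (0,\ldots,\bar q s_i,\ldots,0)$, so this product lies in $R$. Hence $\mathfrak q_i \subseteq \mathfrak C$, and summing gives $(\mathfrak q_1, \ldots, \mathfrak q_\rho) \subseteq \mathfrak C$.

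Conversely, let $r \in \mathfrak C$. Then $r e_i = \varepsilon(s_i)$ for some $s_i \in R$. The $j$-th components of $r e_i$ vanish for $j \neq i$, so $s_i \in \mathfrak p_j$ for all $j \neq i$, i.e. $s_i \in \mathfrak q_i$. Therefore $r = \sum_i r e_i = \varepsilon\left(\sum_i s_i\right)$, and injectivity of $\varepsilon$ gives $r = \sum_i s_i \in \mathfrak q_1 + \cdots + \mathfrak q_\rho$.

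Finally we check that this sum is also an ideal of $\widetilde{R}$, as a conductor should be. By i) it equals $\prod_i \varepsilon(\mathfrak q_i)$ componentwise, i.e. the product of the ideals $\mathfrak q_i \subseteq R_i$. Each $\mathfrak q_i$ is an ideal of $R_i$, since $R \to R_i$ is surjective.

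The only delicate point is this bookkeeping: the ideal $(\mathfrak q_1,\ldots,\mathfrak q_\rho)$ is simultaneously an ideal of $R$ and the product $\prod_i \mathfrak q_i$ with each $\mathfrak q_i$ sitting inside $R_i$. Keeping the identification via $\varepsilon$ explicit handles it, and there is no real obstacle.
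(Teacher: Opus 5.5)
Your proof is correct and follows essentially the same route as the paper. Part i) uses $\mathfrak q_i \cap \mathfrak p_i = \bigcap_j \mathfrak p_j = 0$, and in part ii) your lifting argument is the paper's observation that $q_i r_i$ is a well-defined element of $R$, while your converse, multiplying a conductor element by the idempotents $e_i = 1_{R_i}$, is exactly the paper's converse.
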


\begin{proof}
i) If $q \in \mathfrak q_i$ and $q = 0
\pmod{\mathfrak p_i}$, then
	$q \in \bigcap\limits_{j = 1}^{\rho} \mathfrak p_j 
    = (0)$.

ii) If $q_i \in \mathfrak q_i$ and $r_i \in R_i$, then
$q_i \cdot r_i$ is a well-defined element of $R$,
independent of the representative (as $q_i \cdot p_i = 0$
for all $p_i \in \mathfrak p_i$). Thus, for any
$(r_1, \ldots, r_{\rho}) \in \widetilde{R}$ and any
$(q_1, \ldots, q_{\rho}) \in (\mathfrak q_1, \ldots,
\mathfrak q_{\rho})$, we obtain
$r = q_1 \cdot r_1 + \cdots + q_{\rho} \cdot r_{\rho}
\in R$ with
	$$ \varepsilon(r) = (q_1, \ldots, q_{\rho})
    \cdot (r_1, \ldots, r_{\rho}). $$
Conversely, if $c = (c_1, \ldots, c_{\rho}) \in
\mathfrak C_{\widetilde{R}/R}$, then
    $$c \cdot 1_{R_i} = (0,\ldots, c_i,\ldots, 0) \in R$$
i.e., $c_i \in \mathfrak q_i$.
\end{proof}

Let $v_i$ be the valuation of $\overline{R_i}$ and let
$a_i = v_i(\mathfrak q_i) = \mathrm{dim}_k \left(
\faktor{\overline{R_i}}{\mathfrak q_i \cdot
\overline{R_i}}\right)$ so that
    $$ \mathfrak C_{\widetilde{R}/R} \cdot \overline{R} =
    \left (t_1^{a_1}, \ldots, t_{\rho}^{a_{\rho}}\right)
    \cdot \overline{R} $$
and let $\mathfrak C_{\overline{R}/\widetilde{R}}$ be the
conductor of $\overline{R}/\widetilde{R}$. As
$\overline{R}$ is a principal ideal domain,
	$$ \mathfrak C_{\overline{R}/\widetilde{R}}
    = \left(t_1^{b_1}, \ldots,
	t_{\rho}^{b_{\rho}} \right) \cdot \overline{R} $$
As an ideal of $\widetilde{R}$ the $i^{th}$ component of
$\mathfrak C_{\overline{R}/\widetilde{R}}$ is given by
	$$ \left(\mathfrak C_{\overline{R}/
    \widetilde{R}}\right)_i
    = (t_i^{b_i}, t_i^{b_i+1},
	\ldots, t_i^{b_i+e_i-1}) $$
where $e_i$ is the multiplicity of $R_i$. Furthermore set
$b_i^{\prime} = \mathrm{max}\{b_i, 1\}$ and $\mathfrak C
= \left(t_1^{b_1^{\prime}}, \ldots,
t_{\rho}^{b_{\rho}^{\prime}}\right) \cdot \overline{R}$.
Note that $\mathfrak C$ is a common ideal of
$\overline{R}$ and $\widetilde{R}$ and that in fact
	$$ \mathfrak C = \mathfrak C_{\overline{R}/
    \widetilde{R}} \cap \widetilde{\mathfrak m}
    \subseteq \widetilde{\mathfrak m}, $$
where $\widetilde{\mathfrak m}$ denotes the Jacobson ideal
of $\widetilde{R}$. Furthermore, $J = \mathfrak C \cdot
\mathfrak C_{\widetilde{R}/R}$ is a common ideal of
$\widetilde{R}$ and of $\overline{R}$, and as an ideal
of $\overline{R}$ it is
	\begin{equation}\drlabel{eq:conductor-norm}
	J = \left(t_1^{a_1+b_1^{\prime}}, \ldots,
    t_{\rho}^{a_{\rho}+b_{\rho}^{\prime}} \right),
	\end{equation}
whereas as an ideal of $\widetilde{R}$, its $i^{th}$
component is given by
	\begin{equation}\drlabel{eq:conductor-tilde}
	J_i = \mathfrak q_i \cdot t_i^{b_i^{\prime}} +
    \mathfrak q_i \cdot t_i^{b_i^{\prime}+1} +
	\cdots + \mathfrak q_i \cdot t_i^{b_i^{\prime}+e_i-1}.
	\end{equation}
Note furthermore that
	$$ J \subseteq \mathfrak C_{\overline{R}/\widetilde{R}} \cdot
	\mathfrak C_{\widetilde{R}/R}
	\subseteq \mathfrak C_{\overline{R}/R}, $$
where $\mathfrak C_{\overline{R}/R}$ is the conductor
of~$\overline{R}/R$.

\begin{satz}
Let $R$ be a multibranched curve with $\rho$ branches,
let $d_R = \mathrm{max} \{ a_i + b_i^{\prime} \vert \,
i = 1, \ldots, \rho\}$, and let
$\varphi: k[[X_1, \ldots , X_n]] \longrightarrow
\overline{R}$ be a presentation of~$R$. Then $\varphi$ is
$(d_R-1)$-$\mathcal{L}$-determined.
\end{satz}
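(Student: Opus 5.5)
Following the proof of Theorem~\ref{thm:det-cond}, let $\psi$ be a parameterization with $\varphi \equiv \psi \pmod{J}$. Here $J=(t_1^{a_1+b_1'},\ldots,t_\rho^{a_\rho+b_\rho'})\overline{R}$ by (\ref{eq:conductor-norm}), which contains $\overline{\mathfrak m}^{d_R}$. Write $\varphi(X_i)=\psi(X_i)+h_i$ with $h_i\in\overline{\mathfrak m}^{d_R}\subseteq J$. The goal is an automorphism $\sigma$ of $P$ of the form $X_i\mapsto X_i-H_i$ with $H_i\in (X_1,\ldots,X_n)^2$ and $\varphi\circ\sigma=\psi$. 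This reduces to showing that each $h_i$ lies in $\mathfrak m^2$ in the specific form $h_i=\sum_j g_{i,j}\varphi(X_j)$ with $g_{i,j}\in\mathfrak m$. Once this holds, lift the $g_{i,j}$ to $G_{i,j}\in(X)P$, set $H_i=\sum_j G_{i,j}X_j$, and conclude exactly as in the proof of Theorem~\ref{thm:det-cond}.

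The key step is to show $J\subseteq \mathfrak m\cdot\mathfrak m$, more precisely $J\subseteq \mathfrak m\cdot(\varphi(X_1),\ldots,\varphi(X_n))$. I would use the factorization $J=\mathfrak C\cdot\mathfrak C_{\widetilde R/R}$. Since $\mathfrak C_{\widetilde R/R}=(\mathfrak q_1,\ldots,\mathfrak q_\rho)$ is generated by elements of $\mathfrak q_i\subseteq R$, each generator of $J$ has the form $q\cdot c$ with $q\in\mathfrak q_i$ and $c\in\mathfrak C_i$. Here $c$ is concentrated in the $i$th component, by (\ref{eq:conductor-tilde}). One has $\mathfrak q_i\subseteq\mathfrak m$ because $\rho\ge2$, and $c\in\mathfrak C\subseteq\widetilde{\mathfrak m}$, so $c=(0,\ldots,c_i,\ldots,0)$ with $c_i\in\mathfrak m_i R_i$. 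Choose $r\in\mathfrak m$ with $r\equiv c_i\pmod{\mathfrak p_i}$. Then $q\cdot c=q\cdot r$ as an element of $R$ (the argument of part ii) of the preceding lemma), and this product lies in $\mathfrak m^2$. Moreover $r=\sum_j s_j\varphi(X_j)$ with $s_j\in R$, so $q\cdot c=\sum_j (q s_j)\varphi(X_j)$ with $q s_j\in\mathfrak m$. Summing over generators, with $R$-coefficients (which is legitimate since $J$ is an $R$-ideal), every element of $J$ has the required form $\sum_j g_j\varphi(X_j)$ with $g_j\in\mathfrak m$.

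It also has to be checked that $\psi(X_i)\in R$, so that $\psi$ has the same image $R$. This follows because $h_i\in J\subseteq\mathfrak C_{\overline R/R}\subseteq R$. I expect the main obstacle to be the bookkeeping above, namely verifying that an arbitrary element of $J$, an infinite $\overline R$-combination, is an $R$-combination. The plan is to use that $J$ is an ideal of $\widetilde R$ generated over $\widetilde R$ by products $q\cdot t_i^{b_i'+\ell}$, $0\le\ell<e_i$, as in (\ref{eq:conductor-tilde}). Every $\widetilde R$-multiple $q\cdot u$ of such a product again has the form $q\cdot c'$ with $c'$ concentrated in component $i$ and lying in $\mathfrak m_iR_i$, because the factor $t_i^{b_i'}$ with $b_i'\ge1$ is in the maximal ideal. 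So the argument of the previous paragraph applies termwise, and finitely many terms suffice. Finally, $\sigma$ is an automorphism because $H_i\in(X)^2$, which gives $\varphi\sim_{\mathcal L}\psi$.
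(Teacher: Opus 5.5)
Your proposal is correct and is essentially the paper's proof. You write each difference $\varphi(X_j)-\psi(X_j)\in J$ as a finite sum of products $q\cdot c$ with $q\in\mathfrak q_i\subseteq\mathfrak m$ (using $\rho\ge 2$) and $c$ supported in the $i$th component with $c_i\in\mathfrak m_iR_i$ (using $b_i'\ge 1$); you lift $c_i$ into $\mathfrak m$ so the difference lies in $\mathfrak m^2$, and you then build $\sigma\colon X_j\mapsto X_j-H_j$ with $H_j\in(X_1,\ldots,X_n)^2$. The only cosmetic difference is that the paper absorbs the $R_i$-coefficients into $\mathfrak q_i$ and lifts $q_{i,l}$ and $t_i^{b_i'+l}$ separately to power series $Q,T$, whereas you absorb them into the component factor and expand the lift in terms of the $\varphi(X_j)$.
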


\begin{proof}
Let $\psi: k[[X_1, \ldots , X_n]] \longrightarrow
\overline{R}$ be another presentation with
	$$ \varphi = \psi \pmod{\overline{\mathfrak m}^{d_R}} =
	\left(t_1^{d_R}, \ldots, t_{\rho}^{d_R}\right) \cdot
    \overline{R} \subseteq J. $$
Thus for each $i \in \{1, \ldots, \rho\}$ and each
$j \in \{1, \ldots, n\}$ we have
	$\varphi(X_j)_i - \psi(X_j)_i \in J_i$,
i.e.,
	$$ \varphi(X_j)_i-\psi(X_j)_i=
    \sum\limits_{l=0}^{e_i-1} q_{i,l} 
    \cdot t_i^{b_i^{\prime}+l}. $$
for some $q_{i,l} \in \mathfrak q_i$. Hence if
$t_{i,l} \in \mathfrak m$ is any lift of
$t_i^{b_i^{\prime}+l}$ in $R$, then
	$$ \varphi(X_j)-\psi(X_j)= \sum\limits_{i=1}^{\rho}
    \sum\limits_{l=0}^{e_i-1}  q_{i,l}\cdot t_{i,l}. $$
Letting $Q_{i,j}, T_{i,j} \in (X_1, \ldots, X_n) \cdot
k[[X_1, \ldots, X_n]]$ be any power series with
	$$ q_{i,j}
	= Q_{i,j}(\varphi(X_1), \ldots, \varphi(X_n)), 
    \quad t_{i,j}
	= T_{i,j}(\varphi(X_1), \ldots, \varphi(X_n)), $$
we obtain that
	$$ H_j = \sum\limits_{i=1}^{\rho}
    \sum\limits_{l=0}^{e_i-1} Q_{i,j} \cdot T_{i,j} $$
is in $(X_1, \ldots, X_n)^2 \cdot k[[X_1, \ldots, X_n]]$
with
	$$ \varphi(X_j) - H_{j}(\varphi(X_1), \ldots, 
    \varphi(X_n)) = \psi(X_j) $$
and therefore
	$$ \sigma : k[[X_1, \ldots, X_n]] \longrightarrow 
    k[[X_1, \ldots, X_n]], \quad
	X_i \mapsto X_i - H_i(X_1, \ldots, X_n) $$
is an automorphism of $k[[X_1, \ldots, X_n]]$ with
$\varphi \circ \sigma = \psi$, implying the proposition.
\end{proof}

Write $\mathfrak C_{\overline{R}/R} =
\left(t_1^{c_1},\ldots,
t_{\rho}^{c_{\rho}}\right) \cdot \overline{R}$ and let
$c(R) = c_1 + \cdots + c_{\rho}$ be the conductor degree
of~$R$. Then
	$$ J \subseteq \mathfrak C_{\overline{R}/ 
    \widetilde{R}} \cdot
	\mathfrak C_{\widetilde{R}/R}
	\subseteq \mathfrak C_{\overline{R}/R}, $$
hence $c_i \leq a_i + b_i^{\prime}$ for all $i$. On the
other hand, certainly
	$$ \mathfrak C_{\overline{R}/R} \subseteq 
    \mathfrak C_{\overline{R}/\widetilde{R}},
	\quad \mathfrak C_{\overline{R}/R} \subseteq 
    \mathfrak C_{\widetilde{R}/R} \cap
	\widetilde{\mathfrak m} = J, $$
hence $\mathrm{max}\{a_i, b_i^{\prime}\} \leq c_i$. Thus
	$$ c_i\leq a_i + b_i^{\prime} \leq 2 \cdot 
    \max\{a_i, b_i^{\prime}\}
    \leq 2 \cdot c_i \leq 2 \cdot c_R $$
for all $i$, and therefore we recover Proposition 11
of~\cite{GP}:

\begin{corollary}
If $\varphi: k[[X_1, \ldots, X_n]] \longrightarrow
\overline{R}$ is a presentation of~$R$, then $\varphi$ is
$(2\cdot c(R)-1)$-$\mathcal{L}$-determined, hence
$(4 \cdot \delta(R)-1)$-$\mathcal{L}$-determined.
\end{corollary}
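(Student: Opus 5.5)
The corollary follows directly from the preceding theorem together with the chain of inequalities established just before the statement. By the theorem, $\varphi$ is $(d_R-1)$-$\mathcal{L}$-determined, where $d_R = \max\{a_i + b_i^{\prime} \mid i=1,\ldots,\rho\}$. The first step is to bound $d_R$ by $2\cdot c(R)$.

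Using the inclusions $J \subseteq \mathfrak C_{\overline{R}/R}$ and $\mathfrak C_{\overline{R}/R} \subseteq \mathfrak C_{\overline{R}/\widetilde{R}}$, $\mathfrak C_{\overline{R}/R}\subseteq J$ (componentwise), we obtain for every $i$
$$ a_i + b_i^{\prime} \leq 2\max\{a_i,b_i^{\prime}\} \leq 2 c_i \leq 2(c_1+\cdots+c_\rho) = 2\, c(R). $$
Hence $d_R \leq 2\cdot c(R)$. The second step is the monotonicity of determinacy. Suppose $\psi$ agrees with $\varphi$ modulo $\overline{\mathfrak m}^{2c(R)}$. Since $d_R \leq 2c(R)$, we have $\overline{\mathfrak m}^{2c(R)} \subseteq \overline{\mathfrak m}^{d_R}$, so $\psi$ also agrees with $\varphi$ modulo $\overline{\mathfrak m}^{d_R}$. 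The theorem then makes $\psi$ $\mathcal{L}$-equivalent to $\varphi$. Thus $\varphi$ is $(2c(R)-1)$-$\mathcal{L}$-determined.

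For the delta bound, we would need $c(R) \leq 2\,\delta(R)$ in the multibranch setting. Remark~\ref{rem:genus-delta} proves this only for unibranch curves, via the value semigroup. For reduced curves the inequality $c(R)\le 2\delta(R)$ is still classical (Gorenstein duality gives $c = 2\delta$ in the Gorenstein case, and in general $\dim_k R/\mathfrak C \le \dim_k \overline R/R$). So I would invoke it as a known fact, e.g.\ from \cite{GP} or \cite{HK}.

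Granting it, $2c(R)-1 \leq 4\delta(R)-1$, and monotonicity again gives $(4\delta(R)-1)$-$\mathcal{L}$-determinacy. The only real obstacle is justifying $c(R)\le 2\delta(R)$ beyond the unibranch case. The first route I would try is the length inequality $\ell(R/\mathfrak C_{\overline R/R}) \le \ell(\overline R/R)$, combined with $c = \ell(\overline R/R) + \ell(R/\mathfrak C_{\overline R/R})$.
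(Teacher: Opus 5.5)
Your argument is the paper's own: you bound $d_R=\max_i(a_i+b_i^{\prime})$ by $2c(R)$ via $\max\{a_i,b_i^{\prime}\}\le c_i$, apply the preceding theorem, and pass to the coarser congruence. You also rightly note that $c(R)\le 2\delta(R)$ for $\rho\ge 2$ is not covered by Remark~\ref{rem:genus-delta}; invoking the classical inequality $\dim_k(R/\mathfrak C_{\overline{R}/R})\le\dim_k(\overline{R}/R)$ fills a step the paper leaves implicit.

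One small correction, inherited from the paper's text: the inclusion that gives $a_i\le c_i$ is $\mathfrak C_{\overline{R}/R}\subseteq\mathfrak C_{\widetilde{R}/R}$, not $\mathfrak C_{\overline{R}/R}\subseteq J$. Also, $b_i^{\prime}\le c_i$ needs $c_i\ge 1$, which holds because the local ring $R$ has no nontrivial idempotents. The inclusion $\mathfrak C_{\overline{R}/R}\subseteq J$, together with $J\subseteq\mathfrak C_{\overline{R}/R}$, would force $c_i=a_i+b_i^{\prime}$, and this already fails for the three coordinate axes in $k^3$ (there $c_i=1$ but $a_i+b_i^{\prime}=2$).
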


A better result can be obtained for plane curves. In this
case
	$$ \mathfrak C_{\overline{R}/\widetilde{R}} \cdot
	\mathfrak C_{\widetilde{R}/R} = 
    \mathfrak C_{\overline{R}/R} $$
by~\cite[17.6]{Ku}, hence $c_i = a_i + b_i$, implying
(as $\rho \geq 2$ and $a_i \geq 1$ and $b_i^{\prime}
\leq b_i + 1$ for all $i$)
	$$ c(R) = \sum\limits_{i=1}^{\rho} (a_i + b_i) \geq
    \mathrm{max}\{a_i+b_i^{\prime} \,\vert \, 
    i = 1, \ldots, \rho\} $$
and therefore we get the following corollary 
(cf.~\cite[Theorem 2.1]{Ng20}):

\begin{corollary}
If $R$ is a plane multibranched curve singularity with
$\rho$ branches ($\rho \geq 2$) and $\varphi: 
k[[X_1, \ldots, X_n]] \longrightarrow \overline{R}$ 
is a presentation of~$R$,
then $\varphi$ is $(c(R)-1)$-$\mathcal{L}$-determined,
hence $(2 \cdot \delta(R)-1)$-$\mathcal{L}$-determined.
\qed
\end{corollary}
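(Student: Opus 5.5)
The plan is to combine the preceding theorem with the plane-curve inequality $c(R) \ge d_R$ and then to bound $c(R)$ by the delta-invariant. By the theorem just proved, any presentation $\varphi$ of $R$ is $(d_R-1)$-$\mathcal{L}$-determined, where $d_R = \max\{a_i + b_i^{\prime}\}$. A parameterization that is $j$-$\mathcal{L}$-determined is automatically $j'$-$\mathcal{L}$-determined for every $j' \ge j$, since agreement modulo $\overline{\mathfrak m}^{j'+1}$ implies agreement modulo $\overline{\mathfrak m}^{j+1}$. So the first claim reduces to proving $d_R \le c(R)$.

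For this I use \cite[17.6]{Ku}, as in the discussion before the statement. For plane curves it gives $\mathfrak C_{\overline{R}/R} = \mathfrak C_{\overline{R}/\widetilde{R}} \cdot \mathfrak C_{\widetilde{R}/R}$. Comparing generators componentwise in the principal ideal ring $\overline{R} = \prod k[[t_i]]$ yields $c_i = a_i + b_i$ for each $i$.

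Now fix an index $i$. Since $b_i^{\prime} \le b_i + 1$, we get $a_i + b_i^{\prime} \le c_i + 1$. Because $\rho \ge 2$, there is some $j \ne i$. For that $j$ we have $a_j \ge 1$: indeed $\mathfrak q_j$ maps into the maximal ideal of $R_j$, since an element of $\mathfrak q_j$ lies in $\mathfrak p_i \subseteq \mathfrak m$. Hence $c_j \ge 1$. Therefore
$$a_i + b_i^{\prime} \le c_i + c_j \le \sum_{l=1}^{\rho} c_l = c(R).$$
Taking the maximum over $i$ gives $d_R \le c(R)$, so $\varphi$ is $(c(R)-1)$-$\mathcal{L}$-determined.

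For the second claim I need $c(R) \le 2\delta(R)$ for plane multibranch curves. The plan is to use Gorensteinness: plane curves are complete intersections, hence Gorenstein, and for Gorenstein curves one has the classical equality $c(R) = \dim_k(\overline{R}/\mathfrak C) = 2\dim_k(\overline{R}/R) = 2\delta(R)$ (Kunz \cite{Ku}, or Serre's symmetry). Thus $c(R) - 1 = 2\delta(R) - 1$, which gives the second statement.

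The main obstacle is to make sure that this Gorenstein equality holds in the present generality, with multiple branches and all residue fields equal to $k$. If citing it directly is not acceptable, I would fall back on the general inequality $\dim_k(R/\mathfrak C) \le \delta(R)$, which follows from $\ell(R/\mathfrak C) \le \ell(\overline{R}/R)$. This holds for every reduced curve by duality with the canonical module, and with equality in the Gorenstein case. Combining it with $c(R) = \dim_k(R/\mathfrak C) + \delta(R)$ then gives $c(R) \le 2\delta(R)$.
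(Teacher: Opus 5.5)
Your proposal is correct and follows the paper's argument. Kunz's product formula for plane curves gives $c_i = a_i + b_i$. Since $\rho \ge 2$, some other branch $l$ has $c_l \ge a_l \ge 1$, and this absorbs the extra $1$ coming from $b_i^{\prime} \le b_i + 1$, so $d_R \le c(R)$ and the preceding theorem applies. Your step to $2\delta(R)-1$ via the Gorenstein equality $c(R) = 2\delta(R)$ is exactly what the paper leaves implicit, and your justification of $a_l \ge 1$ (from $\mathfrak q_l \subseteq \mathfrak p_i \subseteq \mathfrak m$) fills in a step the paper only asserts.
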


\section{Hironaka's theorem on equivalence of 
curve singularities \label{sect:Hironaka}}

In~\cite[Theorem B]{Hi}, Hironaka showed that for any 
two reduced and irreducible curve singularities 
$(R, \mathfrak m, k)$ and $(R', \mathfrak m', k')$ over 
an algebraically closed field $k$, the existence of 
some $k$--algebra isomorphism $\varphi_{j+1} :
\faktor{R}{\mathfrak m^{j+1}} \longrightarrow
\faktor{R'}{{\mathfrak m'\,}^{i+1}}$ for $j \geq 3
\cdot \delta(R) + 1$ implies that there is an isomorphism
$\psi: \widehat{R} \longrightarrow \widehat{R'}$ of 
the completions which agrees with $\varphi$ modulo 
$\mathfrak m^{j+1-3\cdot\delta(R)}$.

In this section we will strengthen this bound in case 
$R$ is unibranch and prove that $3 \cdot \delta(R)$ can 
be replaced by $2 \cdot \delta(R)$.

For set up,
let $(R, \mathfrak{m}, k)$ and $(R', \mathfrak{m}', k')$
be equicharacteristic complete one-dimensional local 
noetherian domains with algebraically closed residue 
fields and with local integral closures $\overline{R}$ 
and $\overline{R'}$, respectively.

We write $\overline R = k[[t]]$
for some regular parameter~$t$ of~$\overline{R}$,
and we write $v$ for the $t$-adic valuation on the 
field of fractions of~$R$. By abuse of notation,
we write $\overline{R'} = k'[[t]]$
for some regular parameter~$t$ of~$\overline{R'}$.

\begin{lemma}\drlabel{lem:e-H-mod}
Let $(R, \mathfrak{m}, k)$ and $(R', \mathfrak{m}', k')$ 
be as in the set-up. Let $d, j$ be positive integers 
such that $e(R) - 1 \le d$
and $j \geq 2 \cdot d + 1 $.
Suppose there exists a $k$-algebra isomorphism
	$\varphi: R/\mathfrak{m}^{j+1} \longrightarrow
	R'/{\mathfrak{m}'}^{j+1}$.
Then
\begin{itemize}
\item[i)] For all $i = 0, \ldots, j$,
$\varphi$ induces an isomorphism from
$\faktor{\mathfrak{m}^i}{\mathfrak{m}^{i+1}}$
to
$\faktor{\mathfrak{(m')}^i}{\mathfrak{(m')}^{i+1}}$.
In particular,
$k = k'$ up to isomorphism.
\item[ii)] $e(R) = e(R')$.
\item[iii)] $H_{R}(n) = H_{R'}(n)$ for all $n \in \N$.
\item[iv)]
If $x \in R$ generates a minimal reduction of 
$\mathfrak m$, i.e.,
if $v(x) = e(R)$,
and if $\varphi(x + \mathfrak{m}^{j+1}) 
= y + {\mathfrak{m}'}^{j+1}$,
then $y$ generates a minimal reduction of $\mathfrak m'$,
i.e.,
$v(y) = e(R') = e(R)$.
\end{itemize}
The condition $e(R) - 1 \leq d$ is satisfied by 
$d = \delta(R)$
(see the comment above Lemma \ref{lem:an-delta}).

\end{lemma}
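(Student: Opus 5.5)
Part i) is pure algebra and I will do it first. Since $\varphi$ is a ring isomorphism of local Artinian rings, it maps the maximal ideal $\mathfrak m/\mathfrak m^{j+1}$ onto ${\mathfrak m'}/{\mathfrak m'}^{j+1}$, and hence maps each power $\mathfrak m^i/\mathfrak m^{j+1}$ onto ${\mathfrak m'}^i/{\mathfrak m'}^{j+1}$. Passing to quotients gives $k$-linear isomorphisms $\mathfrak m^i/\mathfrak m^{i+1}\cong {\mathfrak m'}^i/{\mathfrak m'}^{i+1}$ for $0\le i\le j$. For $i=0$ this gives $k\cong k'$; I regard $R'$ as a $k$-algebra via this identification. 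In particular $H_R(i)=H_{R'}(i)$ for $i\le j$.

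The heart of the argument is iv); ii) and iii) then follow from it. Let $x\in R$ with $v(x)=e(R)=a_1$. Then $(x)$ is a reduction of $\mathfrak m$. I claim its reduction number is at most $e(R)-1\le d$. Indeed, $H_R(n)\le e(R)$ for all $n$, since $\mathfrak m^n$ and $\mathfrak m^{n+1}$ have values that are distinct modulo $a_1$. Alternatively, $\mathfrak m^n = x\mathfrak m^{n-1}$ once $n\ge e-1$, by the standard Eakin–Sathaye / one-dimensional CM bound. By Remark~\ref{rem:ered}, $\mu_x:\mathfrak m^n/\mathfrak m^{n+1}\to\mathfrak m^{n+1}/\mathfrak m^{n+2}$ is an isomorphism for $n=d$ (and all $n\ge d$). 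Since $d+2\le j+1$, $\varphi$ transports this: $\mu_y:{\mathfrak m'}^d/{\mathfrak m'}^{d+1}\to{\mathfrak m'}^{d+1}/{\mathfrak m'}^{d+2}$ is an isomorphism. This uses i) and the multiplicativity of $\varphi$, which is valid modulo ${\mathfrak m'}^{j+1}$.

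The hard part is promoting this to a statement about all large $n$ for $R'$: I must show $(y)$ is a reduction of $\mathfrak m'$, given only that $\mu_y$ is bijective in one degree. I would argue as follows. Surjectivity of $\mu_y$ in degree $d$ gives ${\mathfrak m'}^{d+1}= y{\mathfrak m'}^{d}+{\mathfrak m'}^{d+2}$. Nakayama, applied to the finitely generated module ${\mathfrak m'}^{d+1}$, then yields ${\mathfrak m'}^{d+1}=y{\mathfrak m'}^d$. So $(y)$ is a reduction of $\mathfrak m'$ with reduction number $\le d$, and $y$ is a non-zerodivisor because $R'$ is a domain and $y\neq 0$. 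Remark~\ref{rem:ered} now applies to $R'$, and gives $H_{R'}(n)=H_{R'}(d)$ for all $n\ge d$. Similarly $H_R(n)=H_R(d)$ for $n\ge d$. Since $d\le j$, i) gives $H_R(d)=H_{R'}(d)$. Hence $e(R)=H_R(d)=H_{R'}(d)=e(R')$, proving ii). Together with agreement for $n\le j$ and constancy for $n\ge d$, this proves iii).

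It remains to get $v(y)=e(R')$. A generator of a reduction of $\mathfrak m'$ satisfies $v(y)=\min v(\mathfrak m')=e(R')$: from $y{\mathfrak m'}^d={\mathfrak m'}^{d+1}$ one gets $v(y)+d\,a_1'=(d+1)a_1'$. Here one checks that the minimum of $v$ on ${\mathfrak m'}^n$ is $n a_1'$. This completes iv).

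The hypothesis $j\ge 2d+1$ is more than the $j\ge d+1$ used above. The extra room may be needed if the transport of $\mu_x$ is done more carefully. I expect the main obstacle to be justifying, solely from $e(R)-1\le d$, that the reduction number of $\mathfrak m$ is at most $d$. If the Hilbert-function bound argument falls short, I would use the $2d+1$ of room to compare degrees $d$ and $2d$ instead.
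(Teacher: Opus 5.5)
Your proposal is correct and follows essentially the same route as the paper. You bound the reduction number of $\mathfrak m$ by $e(R)-1\le d$ (the paper simply cites Rossi for this, as you cite Eakin--Sathaye), transport the bijection $\mu_x$ in one degree through $\varphi$, use Nakayama (equivalently Remark~\ref{rem:ered}) to see that any lift $y$ generates a reduction of $\mathfrak m'$, equate $e(R)=H_R(d)=H_{R'}(d)=e(R')$, and read off $v(y)=e(R')$. Two small corrections: in your alternative justification the correct statement is $\mathfrak m^{n}=x\,\mathfrak m^{n-1}$ for $n\ge e(R)$, not $n\ge e(R)-1$; and the inequality $H_R(n)\le e(R)$ alone does not yield the reduction-number bound without an Eakin--Sathaye-type step.
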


\begin{proof}
The isomorphism $\varphi$ takes the units 
resp. nilpotents of 
$\faktor{R}{\mathfrak m^{j+1}}$ bijectively 
to the units resp. nilpotents 
of $\faktor{R'}{\mathfrak {m'}^{j+1}}$,
hence it takes
$\faktor{\mathfrak m}{\mathfrak m^{j+1}}$
isomorphically to 
$\faktor{\mathfrak {m'}}{\mathfrak {m'}^{j+1}}$,
and thus $\faktor{\mathfrak m^i}{\mathfrak m^{i+1}}$
isomorphically to 
$\faktor{\mathfrak {m'}^i}{\mathfrak {m'}^{i+1}}$
for all $i \le j$.
This proves i).

Let $r$ be the reduction number of~$\mathfrak m$.
Then $r \leq e(R) -1$ (cf.~\cite{Ro}),
so that $r \leq d < 2 d + 1 \le j$.

Let $x \in \mathfrak m$ generate a minimal reduction 
of~$\mathfrak m$. By~\cite[page 504]{Hu},
$(x)$ has reduction number equal to~$r$.
Consider the commutative diagram for 
$n = r, r+1, \ldots, j-1$:
	$$ \begin{tikzcd}
	\faktor{\mathfrak{m}^n}{\mathfrak{m}^{n+1}}
	\arrow[r, "\mu_x"] \arrow[d, "\varphi"] &
	\faktor{\mathfrak{m}^{n+1}}{\mathfrak{m}^{n+2}}
	\arrow[d, "\varphi"] \\
	\faktor{\mathfrak{(m')}^n}{\mathfrak{(m')}^{n+1}}
	\arrow[r, "\mu_{\varphi(\overline{x})}"] &
	\faktor{\mathfrak{(m')}^{n+1}}{\mathfrak{(m')}^{n+2}}
	\end{tikzcd} $$
where by $\overline{\phantom{x}}$ we
denote images modulo $\mathfrak{m}^{j+1}$.
By assumptions, the two vertical arrows and the top 
horizontal arrow are isomorphisms,
so by commutativity,
the bottom arrow is an isomorphism as well.
Thus by Remark~\ref{rem:ered} and since $r \le j-1$,
we have that any $y \in \mathfrak{m'}$ 
with  $(y) + {\mathfrak{m}'}^{j+1} = 
(\varphi(\overline{x}))$ 
generates a reduction of $\mathfrak{m'}$
with reduction number at most~$r$.
Furthermore,
	$$ \begin{aligned}
	e(R')
	&= H_{R'}(n)
	= H_{R'}(r)
	= \dim_k \left(\faktor{\mathfrak{(m')}^r}{ 
    \mathfrak{(m')}^{r+1}}\right) \\
	&= \dim_k \left(\faktor{\mathfrak{m}^r}{ 
    \mathfrak{m}^{r+1}}\right)
	= H_{R}(r)
	= H_{R'}(n)
	= e(R)
	\end{aligned} $$
for all $n \geq r$.
As $H_R(n) = H_{R'}(n)$ for $n < r$ via the 
isomorphism $\varphi$,
we conclude that $H_R(n) = H_{R'}(n)$ for all $n \in \N$.
This proves ii) and iii).

Finally,
$x \in \mathfrak m$ generates a minimal reduction 
of~$\mathfrak m$ if and only
if $v(x) = e(R)$.
As $y$ generates a minimal reduction 
of~$\mathfrak{m}'$, this implies that 
$v(y) = e(R') = e(R)$.
This proves iv).
\end{proof}

\begin{proposition}\drlabel{prop:conductor-mod-pow}
Let $(R, \mathfrak{m}, k)$ be as in the set-up,
let $e$ be the multiplicity of~$R$,
let $\overline{R} = k[[t]]$ be its normalization,
let $c \geq 1$
and let $I \subseteq R$ be an ideal.
Then the following are equivalent:
\begin{itemize}
\item[i)] $I = (t^c, t^{c+1}, \ldots, t^{c+e-1})$
(in particular $I \subseteq \mathfrak C_R$).
\item[ii)] There exists a chain of ideals
	$I = J_0 \supseteq J_1 \supseteq \cdots 
    \supseteq J_{e-1}$
such that
\begin{enumerate}
\item $\mathrm{dim}_k\left(\faktor{J_i}{J_{i+1}}\right) 
= 1$ for $i = 0, \ldots, e-2$.
\item $\mathrm{dim}_k\left(\faktor{J_i}{J_i^2}\right) 
= c+i$ for $i = 0, \ldots, e-1$.
\item The reduction number $r(J_i) = 1$ for 
$i = 0, \ldots, e-1$.
\end{enumerate}
\noindent
\end{itemize}

Moreover, $I = \mathfrak C_R$ is the conductor ideal 
of~$R$ if and only if $I$ satisfies any one of the 
equivalent conditions for some $c \geq 1$ and there 
exists no pair $(I', c')$ with $1 \leq c' < c$
that also satisfies these conditions.
\end{proposition}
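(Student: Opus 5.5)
The plan is to read condition i) as saying that $I = t^c\overline{R}$ with $t^c\overline{R} \subseteq R$. Indeed, $\overline{R}$ is generated over $k[[x]]$ (with $v(x)=e$) by $1, t, \ldots, t^{e-1}$, so $t^c\overline{R} = \sum_{i=0}^{e-1} t^{c+i}k[[x]]$. Hence $I=(t^c,\ldots,t^{c+e-1})$ is an ideal of $R$ exactly when $t^c\overline{R}\subseteq R$, i.e.\ when $c \geq c(V)$, and then $I \subseteq \mathfrak C_R = t^{c(V)}\overline{R}$ by \cite[2.4]{HK}.

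\emph{i) $\Rightarrow$ ii).} Set $J_i = t^{c+i}\overline{R}$; these are ideals of $R$ since $c+i \geq c(V)$. Then:
\begin{itemize}
\item $\dim_k(J_i/J_{i+1}) = 1$.
\item $J_i^2 = t^{2c+2i}\overline{R}$, so $\dim_k(J_i/J_i^2) = c+i$.
\item $J_i^2 = t^{c+i}J_i$ with $t^{c+i}\in J_i$, so $(t^{c+i})$ is a reduction with reduction number at most $1$. It is exactly $1$ because $J_i$ is not principal in $R$ when $e\geq 2$; the degenerate case $e=1$, where $R=\overline{R}$, I would treat separately.
\end{itemize}

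\emph{ii) $\Rightarrow$ i).} This direction carries the content. Suppose $J=J_i$ has reduction number $1$, so $J^2=yJ$ for some $y\in J$, which can be chosen with $v(y)=\min V(J)$. Put $S=y^{-1}J$. From $J^2=yJ$ we get $S\cdot S = y^{-2}J^2 = y^{-1}J = S$, so $S$ is a ring. Moreover $1\in S$, $R\subseteq S$, and $S$ is a finite birational extension, so $R\subseteq S\subseteq\overline{R}$. Next compare lengths:
\[
\dim_k(J/J^2) = \dim_k(J/yJ) = \dim_k(R/yR) = v(y),
\]
using the standard fact that $\ell(J/yJ)=\ell(R/yR)$ for a fractional ideal $J$ in a one-dimensional domain. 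Condition (2) then gives $\min V(J_i) = c+i$ for every $i$. Since each step $J_i\supseteq J_{i+1}$ has codimension $1$, we get $\{c, c+1, \ldots, c+e-1\}\subseteq V(J_0)$.

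Now take $i=0$ and $S=y^{-1}J_0$ with $v(y)=c$. Then $V(S)$ contains $0,1,\ldots,e-1$ and also $e=v(x)$. Since $S$ is a ring, $V(S)$ is closed under addition, so $V(S)=\N$. Then $\dim_k(\overline{R}/S)$, which equals the number of gaps of $V(S)$ by \cite[2.9]{HK}, is $0$, and so $S=\overline{R}$. Hence $J_0 = y\overline{R} = t^c\overline{R}$, and this lies in $R$, which is i). The main obstacle is the length identity $\dim_k(J/J^2)=v(y)$ together with the choice of $y$ of minimal value generating the reduction. For the latter I would use that a minimal reduction of an $\mathfrak m$-primary ideal in this setting is generated by any element of minimal value, in analogy with the fact used in Lemma~\ref{lem:e-H-mod}~iv).

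\emph{Characterization of the conductor.} By \cite[2.4]{HK}, $\mathfrak C_R = t^{c(V)}\overline{R}$, so the pair $(\mathfrak C_R, c(V))$ satisfies i). Conversely, any pair $(I',c')$ satisfying i) needs $t^{c'}\overline{R}\subseteq R$, which forces $c'\geq c(V)$. So $I=\mathfrak C_R$ exactly when $I$ satisfies the conditions with $c$ minimal. One should also check that $c\geq 1$ is compatible with this, since when $R\neq\overline{R}$ we have $c(V)\geq 2$.
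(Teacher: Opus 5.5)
Your proposal is correct and is essentially the paper's proof. You use the same chain $J_i=t^{c+i}\overline R$ for i)~$\Rightarrow$~ii). For ii)~$\Rightarrow$~i) you make the same extraction of $\min V(J_i)=\dim_k(J_i/J_i^2)=c+i$ from the reduction-number-one hypothesis (you via $\ell(J/yJ)=\ell(R/yR)=v(y)$, the paper via $e(J_i;R)=e(J_i\overline R;\overline R)$), and then fill in all values $\ge c$ using an element of value $e$. Your worry about choosing $y$ of minimal value is moot, since $J^2=yJ$ forces $v(y)+\min V(J)=\min V(J^2)=2\min V(J)$; equivalently, your observation $y^{-1}J\subseteq\overline R$ already gives it. You are also more careful than the paper in two places. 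First, you prove both directions of the conductor characterization, whereas the paper writes out only one. Second, you flag the case $e=1$, where (3) can only hold in the form $r(J_i)\le 1$ and the final clause needs $R\neq\overline R$.
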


\begin{proof}
Note that $I$ contains all $t^s$ for $s \geq c$
as $e$ is the multiplicity of~$R$,
and since all must be in $R$,
necessarily $I$ is in the conductor.
Furthermore, the ideals 
$J_i = (t^{c+i}, t^{c+i+1}, \ldots, t^{c+i+e-1})$ form 
a chain of ideals satisfying the conditions of ii)
(the reduction number is $1$ with $(t^{c+i})$ being 
a minimal reduction).

Conversely, let 
$I = J_0 \supset J_1 \supseteq \cdots 
\supseteq J_{e-1}$
be a chain as in ii).
By assumption,
$c+i = \mathrm{dim}_k\left(\faktor{J_i}{J_i^2}\right) 
= H_{J_i}(1)$,
and since the reduction number of $J_i$ is $1$,
by Remark~\ref{rem:ered},
$H_{J_i}(1) = e(J_i;R)$.
By~\cite[Theorem 11.2.7]{SH},
this equals $e(J_i\overline{R}; \overline{R}) = 
e(J_i k[[t]]; k[[t]])$,
which implies that $J_i \cdot \overline{R} 
= t^{c+i} \cdot \overline{R}$.
Hence $J_i$ contains an element of valuation $c+i$.
Therefore $I$ contains elements of valuations 
$c, c+1, \ldots, c+e-1$,
so all elements of valuation $c$ and higher,
but none of smaller valuation.
Hence (by standard approximation arguments),
$I$ contains all powers $t^j$ for $j \geq c$.
Thus i) is satisfied.

The conductor ideal $I = \mathfrak C_R$ obviously 
satisfies the conditions of ii) with $c = c(R)$, 
and clearly there cannot be a pair $(I', c')$ with 
$1 \leq c' < c(R)$ satisfying
these conditions.
This completes the proof.
\end{proof}

\begin{lemma}\drlabel{lem:phi-cond-mod-preserve}
Let $(R, \mathfrak{m}, k)$ and $(R',\mathfrak{m}', k')$ 
be as in the set-up.
Let $e(R) \geq 2$,
and $j$ be an integer such that
$j \cdot e(R) \ge 2 \cdot (c(R) + e(R) - 1)$.
Assume that there exists a $k$-algebra isomorphism
	$$ \varphi: \faktor{R}{\mathfrak{m}^{j+1}} 
    \longrightarrow
	\faktor{R'}{{\mathfrak{m}'}^{j+1}}. $$
Then the preimage in $R'$ of
$\varphi\left(
\faktor{(\mathfrak C_R + \mathfrak m^{j+1})}{ 
\mathfrak m^{j+1}}\right)$
is $\mathfrak C_{R'}$, the conductor of $R'$,
and if $x \in R$ generates a minimal reduction of 
$\mathfrak C_R$ and $x' \in R'$ 
is an element with $x'\pmod{{\mathfrak{m}'}^{j+1}} =
\varphi(x\pmod{\mathfrak{m}^{j+1}})$, then
$x'$ generates a minimal reduction of $\mathfrak C_{R'}$.

The hypotheses on $j$ are satisfied
for all $j \ge \frac {5}{e(R)} \cdot \delta(R) + 1$,
and even for all $j \ge 2 \delta(R) + 1$
when $e(R) = 2$.
So in all cases,
we may take $j \ge 2 \delta(R) + 1$.
\end{lemma}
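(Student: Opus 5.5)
The plan is to transport the characterization of the conductor from Proposition~\ref{prop:conductor-mod-pow} through $\varphi$. All the conditions in part ii) of that proposition can be checked modulo a sufficiently high power of the maximal ideal: colengths of ideals containing $\mathfrak m^{j+1}$, the dimensions $\dim_k(J_i/J_i^2)$, and reduction number $1$. The first step is to fix where $\mathfrak C_R$ sits relative to $\mathfrak m^{j+1}$. Since $v(\mathfrak m^{j+1}) \ge (j+1)e(R)$, the hypothesis $j e(R) \ge 2(c(R)+e(R)-1)$ gives
$$\mathfrak m^{j+1} \subseteq t^{2(c+e-1)+e}\overline R \subseteq \mathfrak C_R^2,$$
and the same inclusion holds for every $J_i^2$, where $J_i=(t^{c+i},\dots,t^{c+i+e-1})$ for $i\le e-1$. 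Indeed, $J_i^2 = t^{2(c+i)}\overline R$, because $J_i$ contains all powers $t^s$ with $s\ge c+i$, so $J_i^2 \supseteq t^{2c+2i}\overline R$, and the reverse inclusion is clear. The relevant exponent is at most $2(c+e-1)$.

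With this in hand, put $J_i' = $ the preimage of $\varphi(J_i/\mathfrak m^{j+1})$. These ideals form a chain in $R'$ containing ${\mathfrak m'}^{j+1}$, with one-dimensional successive quotients. The key claim is that $J_i'^2 \supseteq {\mathfrak m'}^{j+1}$, so that $J_i'/J_i'^2 \cong J_i/J_i^2$ via $\varphi$ and condition (2) transfers. I expect this to be the main obstacle, because ${\mathfrak m'}^{j+1}\subseteq J_i'^2$ is a statement about $R'$ that $\varphi$ does not directly see. My first approach: by Lemma~\ref{lem:e-H-mod}, $e(R')=e(R)$ and the Hilbert functions agree. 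Take $x\in J_i$ with $v(x)=c+i$, so that $xJ_i=J_i^2$. Then $J_i^2/\mathfrak m^{j+1}=\varphi^{-1}$-compatible, giving $x'J_i' + {\mathfrak m'}^{j+1} = J_i'^2+{\mathfrak m'}^{j+1}$. Colength counting then gives $\dim_k R'/(x'J_i'+{\mathfrak m'}^{j+1}) = \dim_k R/J_i^2$.

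On the other hand, $x'$ is a nonzerodivisor of $R'$ with $v(x')\le$ something. I would bound it using that $x'$ has high order in $\mathfrak m'$-adic terms via Hilbert function equality, and that the $\mathfrak m'$-adic order equals that of $x$ under $\varphi$. Then $\dim_k R'/x'J_i'$ is computable as $v(x') + \dim_k R'/J_i'$. Comparing the two forces $x'J_i'\supseteq {\mathfrak m'}^{j+1}$ once $v(x')=c+i$. If this circular step resists, the fallback is an induction from $J_{e-1}$ upward, or a direct estimate $v(\mathfrak m'^{j+1})\ge (j+1)e$ together with showing $J_i'$ contains an element of value at most $c+i$ via the multiplicity/reduction comparison of Lemma~\ref{lem:e-H-mod}~iv). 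That comparison shows that $\mu_{x'}$ on the relevant graded pieces is an isomorphism, hence $r(J_i')=1$ and $e(J_i';R')=c+i$.

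Once conditions (1)--(3) hold for the chain $J_i'$, Proposition~\ref{prop:conductor-mod-pow} gives $J_0'=(t^{c'},\dots,t^{c'+e-1})$ with $c'=c$, contained in $\mathfrak C_{R'}$. Running the same argument with $\varphi^{-1}$ from any smaller admissible pair $(I',c')$ in $R'$ yields one in $R$, contradicting minimality. So by the ``moreover'' clause the image is exactly $\mathfrak C_{R'}$. The reduction statement follows: $x$ generating a minimal reduction means $v(x)=c$, and the isomorphism $\mu_x:\mathfrak C_R/\mathfrak C_R^2\to \mathfrak C_R^2/\mathfrak C_R^3$ transfers to $\mu_{x'}$ via Remark~\ref{rem:ered}, using $\mathfrak m^{j+1}\subseteq \mathfrak C_R^3$. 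That inclusion needs $(j+1)e\ge 3c$, which again follows from the hypothesis.

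Finally, the numerical remarks use $c\le 2\delta$ (Remark~\ref{rem:genus-delta}) and $e-1\le\delta$. The requirement becomes $je\ge 2(c+e-1)$, and $2(c+e-1)\le 4\delta+2\delta\cdot$... More precisely, $2c+2e-2\le 4\delta+2\delta$; the sharper claimed bound $5\delta/e+1$ would be obtained from $c+e-1 \le 2\delta + e - 1$ together with the $e\ge2$ estimates. For $e=2$, the semigroup is $\langle 2,2g+1\rangle$ with $c=2\delta$, so $2j\ge 2(2\delta+1)$ exactly when $j\ge 2\delta+1$.
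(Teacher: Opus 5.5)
Your architecture is the paper's: transport the chain $J_i=(t^{c+i},\dots,t^{c+i+e-1})$ through $\varphi$, check condition ii) of Proposition~\ref{prop:conductor-mod-pow} for the preimages $J_i'$, deduce $J_0'\subseteq\mathfrak C_{R'}$, and run the argument back with $\varphi^{-1}$. However, the step you flag as the main obstacle, ${\mathfrak m'}^{j+1}\subseteq J_i'^2$, is never established. Conditions (2) and (3) and the reduction statement all rest on it.

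\emph{Why your attempts do not close it.} Your colength comparison only yields $v(x')\ge c+i$, with equality exactly when ${\mathfrak m'}^{j+1}\subseteq x'J_i'$, so it reduces the claim to itself. The $\mathfrak m'$-adic order of $x'$ does not pin down $v(x')$. The fallback through the diagram of Lemma~\ref{lem:e-H-mod} would need $\varphi$ to identify $J_i^2/J_i^3$ with $J_i'^2/J_i'^3$ (Remark~\ref{rem:ered} with $r=1$). That presupposes containments such as ${\mathfrak m'}^{j+1}\subseteq J_i'^3$, which are not available, and $\mathfrak m^{j+1}\subseteq J_i^3$ can already fail in $R$ (see below).

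\emph{The missing idea} is to use the margin in the hypothesis together with Nakayama.
\begin{itemize}
\item Since $je\ge 2(c+e-1)$, you actually get $\mathfrak m^{j}\subseteq t^{je}\overline R\subseteq t^{2(c+i)}\overline R=J_i^2$, with $\mathfrak m^j$ rather than $\mathfrak m^{j+1}$.
\item $\varphi$ maps $\mathfrak m^j/\mathfrak m^{j+1}$ onto ${\mathfrak m'}^j/{\mathfrak m'}^{j+1}$, and the preimage of $\varphi(\overline{J_i^2})$ is $J_i'^2+{\mathfrak m'}^{j+1}$. Hence ${\mathfrak m'}^{j}\subseteq J_i'^2+{\mathfrak m'}^{j+1}$.
\item Multiplying by $\mathfrak m'$ gives ${\mathfrak m'}^{j+1}\subseteq \mathfrak m'J_i'^2+\mathfrak m'\cdot{\mathfrak m'}^{j+1}$. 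Nakayama then gives ${\mathfrak m'}^{j+1}\subseteq\mathfrak m'J_i'^2$.
\item Consequently $J_i'/J_i'^2\cong J_i/J_i^2$, which is condition (2).
\item The relation $y_iJ_i=J_i^2$ transports to $J_i'^2\subseteq y_i'J_i'+\mathfrak m'J_i'^2$, so $J_i'^2=y_i'J_i'$ by Nakayama again. This is condition (3), and for $i=0$ it is already the assertion about minimal reductions.
\end{itemize}

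\emph{Your separate argument for the reduction statement is incorrect.} The inclusion $\mathfrak m^{j+1}\subseteq\mathfrak C_R^3$, i.e.\ $(j+1)e\ge 3c$, does not follow from $je\ge 2(c+e-1)$. Take $R=k[[t^2,t^{2\delta+1}]]$ with $\delta\ge 3$ and $j=2\delta+1$: then $(j+1)e=4\delta+4<6\delta=3c$. This is why the paper uses $\mathfrak C^3$ only later, in Remark~\ref{rem:cond-2delta-m}, under $j\ge 3c/e$ and for $e\ge 3$. Even where the inclusion holds in $R$, you would also need the corresponding containment in $R'$, which is the same issue as above. With the Nakayama argument this detour is unnecessary.

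\emph{The numerical paragraph trails off.} What is needed is short. From $c\le 2\delta$ and $e-1\le\delta$ you get $2(c+e-1)\le 4\delta+2e-2<5\delta+e$. So $j\ge\frac5e\delta+1$ forces $je>2(c+e-1)$. Finally, $\frac5e\delta+1\le 2\delta+1$ once $e\ge 3$, which together with your $e=2$ computation justifies taking $j\ge 2\delta+1$ in all cases.
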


\begin{proof}
By Lemma~\ref{lem:e-H-mod}, $e(R') = e(R)$ and $k = k'$.
Set $e = e(R)$.

Observe that for any ideals $I, J, K$ in any ring $S$,
if $I \subseteq J \subseteq K$
and $\overline{\phantom{r}}$ denotes images 
modulo $I$, then $\faktor{K}{J} = \faktor{(K+I)}{(J+I)} 
= \faktor{\overline K}{\overline J}$.

Below,
$\overline{\phantom{h}}$ denotes images 
modulo $\mathfrak{m}^{j+1}$,
$\overline{\phantom{h}}'$ images after applying $\varphi$,
and $X'$ denotes, if $X$ is an ideal of $R$,
the preimage in $R'$ of $\overline{X}'$,
and $X'$ denotes, if $X$ is an element of $R$,
a choice of a preimage in $R'$ of~$\overline{X}'$.
Note that, if $K^2$ contains $\mathfrak{m}^{j+1}$,
then $(K')^2 = (K^2)'$.
Hence for any non-negative integer $i \le j + 1$,
$(\mathfrak{m}')^i$ of $R'$ is the 
$i$th power of the maximal ideal $\mathfrak{m}'$ of $R'$,
which is the same as the preimage in $R'$ 
of~$\overline{\mathfrak{m}^i}'$.
Any containment relations in $R$
remain so in $\overline R$, $\overline R'$ and in $R'$.

Set $J_i = (t^{c(R)+i}, t^{c(R)+i+1}, \ldots, 
t^{c(R)+i+e-1})$ for $i = 0, \ldots, e-1$.
Then $\mathfrak C_R = J_0 \supseteq J_1 \supseteq \cdots 
\supseteq J_{e-1}$,
$\mathrm{dim}_k\left(\faktor{J_i}{J_{i+1}}\right) = 1$  
for each $i = 0, \ldots, e-2$,
and for each $i = 0, \ldots, e-1$,
$\mathrm{dim}_k\left(\faktor{J_i}{J_i^2}\right) = c(R)+i$ 
and the reduction number of $J_i$ is $1$
with a principal minimal reduction generated by some $y_i$
($y_i$ can be $t^{c(R)+i}$).

As $v(\mathfrak{m}^j) = j \cdot e$, the assumption 
$j \cdot e \ge 2 \cdot (c(R) + e - 1)$ implies that
	$J_{i}^2 \supseteq \mathfrak{m}^j$ for all~$i$.
Therefore, with the observation and notation 
above and the fact that $\varphi$ is an isomorphism,
we get that for all $i \le e-2$, respectively for all 
$i \le e-1$:
$$
\begin{aligned}
\mathrm{dim}_k\left(\faktor{{J_i}'}{{J_{i+1}}'}\right) &=
\mathrm{dim}_k\left(\faktor{\overline{J_i}'}{\,
\overline{J_{i+1}}'}\right) =
\mathrm{dim}_k\left(\faktor{\overline{J_i}}{\,
\overline{J_{i+1}}}\right) \\
&=\mathrm{dim}_k\left(\faktor{J_i}{J_{i+1}}\right) = 1, \\
\end{aligned} $$
and similarly
	$$ \mathrm{dim}_k\left(\faktor{{J_i'}}{ 
    {J_i'}^2}\right) =  
	\mathrm{dim}_k\left(\faktor{J_i}{J_i^2}\right) 
    = c(R)+i. $$

From $(y_i) \cdot J_i = J_i^2$
we deduce that $(\overline {y_i}) \cdot \overline{J_i} 
= \overline{J_i^2}$, $(\overline {y_i}') \cdot 
\overline{J_i}' = \overline{J_i^2}'$,
and
$$
({y_i}') \cdot {J_i'} \subseteq {(J_i^2)'} = {J_i'^2}
\subseteq
({y_i}') \cdot {J_i'} + \mathfrak{m'}^{j+1}.
$$
As $\mathfrak{m'}^{j+1} \subseteq \mathfrak m J_i'^2$,
Nakayama's Lemma proves that
$({y_i}') \cdot {J_i'} = {J_i'^2}$. Thus the 
condition ii) of 
Proposition~\ref{prop:conductor-mod-pow} is 
satisfied for $(\mathfrak{C}_R)' = J_0' 
\supseteq J_1' \supseteq J_2' 
\supseteq \cdots \supseteq J_{e-1}'$,
and therefore
	$$ (\mathfrak{C}_R)' =J'_0 = 
	(t^{c(R)}, t^{c(R)+1},\ldots, t^{c(R)+e-1}) \cdot R' 
	\subseteq k[[t]] $$
and $(\mathfrak{C}_R)'$ is contained in $\mathfrak C_{R'}$,
implying in particular that $c(R') \leq c(R)$.

Set $K_i = (t^{c(R')+i}, t^{c(R')+i+1}, 
\ldots, t^{c(R')+i+e-1}) \subseteq R'$.
Since $j \cdot e \geq 2(c(R) + e-1) \geq 2(c(R') + e-1)$
and since $e = e(R) = e(R')$,
the same reasoning applied to $\varphi^{-1}$
proves that $c(R) \leq c(R')$.

Hence $(\mathfrak C_R)' = \mathfrak C_{R'}$, and a 
minimal reduction of $\mathfrak C_{R}$
gets mapped to a minimal reduction of $\mathfrak C_{R'}$.

It remains to prove the last paragraph of the lemma.
By Remark~\ref{rem:genus-delta},
$2 \cdot \delta(R) \ge c(R)$
and $\delta(R) \geq e-1$ by a comment above 
Lemma~\ref{lem:an-delta}.
Thus $5 \cdot \delta(R) > 2 c(R) + e - 2$,
whence
$5 \cdot \delta(R)  + e > 2 \cdot (c(R) + e - 1)$.
It follows that
$\frac {5}{e} \cdot \delta(R) + 1 > 
\frac{2}{e} \cdot (c(R) + e - 1)$,
so that if $j \ge \frac {5}{e} \cdot \delta(R) + 1$,
then $j \cdot e > 2 \cdot (c(R) + e - 1)$.
When $e = 2$,
if $j \ge 2 \delta(R) + 1$,
then $j \cdot e \ge (2 \delta(R) + 1) \cdot e 
= 4 \delta(R) + 2 \ge 2 c(R) + 2
= 2 \cdot (c(R) + e - 1))$.
This finishes the proof.
\end{proof}

\begin{notiz}\drlabel{rem:cond-2delta-m}
If $j \ge 3 \cdot \frac {c(R)}{e(R)}$, 
then $\mathfrak C_R^3 \supseteq \mathfrak m^j$.
As in the proof of Lemma~\ref{lem:phi-cond-mod-preserve} 
we then have
$$
(\mathfrak m')^{j+1}
\subseteq (\mathfrak m')^j
\subseteq \mathfrak (C_R^3)'
= (\mathfrak C_R')^3
= (\mathfrak C_{R}')^3.
$$
By Remark~\ref{rem:genus-delta},
the condition on $j$ is satisfied by all 
$j \ge 6 \cdot \frac {\delta(R)}{e(R)}$, 
hence in particular for $j \geq 2 \cdot \delta(R)$ 
if $e(R) \geq 3$.
\end{notiz}

The following technical lemma will play a crucial role:

\begin{lemma}\drlabel{lem:lifting-blowup}
Let $(R, \mathfrak{m}, k)$ and $(R', \mathfrak{m}', 
k' = k)$ be as in the set-up,
let $I, J \subseteq R$ and $I', J' \subseteq R'$ be ideals,
$\widetilde{R}$ the blowup algebra of $J$ in $R$
and $\widetilde{R'}$ the blowup algebra of $J'$ in $R'$.
Assume that for some $\alpha$
there exists a $k$-algebra isomorphism
	$\varphi: \faktor{R}{\mathfrak{m}^{\alpha}} 
    \longrightarrow
	\faktor{R'}{{\mathfrak{m}'}^{\alpha}}$
and that
\begin{enumerate}
\item
$I \cdot J^3 \supseteq {\mathfrak m}^{\alpha}$ and
$I' \cdot {J'}^3 \supseteq {\mathfrak m'}^{\alpha}$.
\item $\varphi(I+\mathfrak{m}^{\alpha}) 
= I'+{\mathfrak{m}'}^{\alpha}$
and $\varphi(J+\mathfrak{m}^{\alpha}) 
= J'+{\mathfrak{m}'}^{\alpha}$.
\item $J$ and $J'$ have reduction number $1$, and if $(x)$
is a minimal reduction of $J$ and $x'$ is an element 
of $R'$ with 
$x' = \varphi(x) \pmod{{\mathfrak m'}^{\alpha}}$, 
then$(x')$ is a minimal reduction of $J'$.
\item $I \cdot J$ is a common ideal of $R$ and 
$\widetilde{R}$ and $I' \cdot J'$ is a 
common ideal of $R'$ and $\widetilde{R'}$.
\end{enumerate}
Then $\varphi$ induces an isomorphism
	$$ \widetilde{\varphi}: \faktor{\widetilde{R}}{I 
    \cdot J} \longrightarrow
	\faktor{\widetilde{R'}}{I' \cdot J'} $$
of $k$-algebras such that
	$$ \widetilde{\varphi} \vert _{\faktor{R}{I \cdot J}} 
    \,\, = \varphi \pmod{I \cdot J}. $$
\end{lemma}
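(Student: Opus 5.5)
The blowup algebra of an ideal $J$ with reduction number $1$ and minimal reduction $(x)$ has the explicit description
$$\widetilde R = R\left[\tfrac{J}{x}\right] = \tfrac{J}{x} = \left\{ \tfrac{a}{x} \,\Big\vert\, a \in J \right\} \subseteq \overline R,$$
because $J^2 = xJ$ forces $(J/x)^n = J^n/x^n = J/x$ for all $n\ge 1$. The plan is to define $\widetilde\varphi$ through this description. Fix $x$ and $x'$ as in hypothesis (3). By hypothesis (2), choose for every $a \in J$ a lift $a' \in J'$ with $a' \equiv \varphi(a) \pmod{{\mathfrak m'}^{\alpha}}$. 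Then set
$$\widetilde\varphi\left(\tfrac{a}{x} + IJ\right) = \tfrac{a'}{x'} + I'J'.$$
On $R$, note that $r = rx/x$ with $rx \in J$. Taking the lift $(rx)' = r'x'$, we get $\widetilde\varphi(r) = r'$ modulo $I'J'$, which is the required compatibility with $\varphi$.

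The central step is well-definedness. Two things must be shown.

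First, the value is independent of the lift. Two lifts differ by $m \in {\mathfrak m'}^{\alpha} \subseteq I'J'^3$ by hypothesis (1). Then $m/x' \in I'J'^3/x' = I'J'\cdot (J'/x')\cdot J' \subseteq I'J'\widetilde{R'}$. By hypothesis (4), $I'J'$ is an ideal of $\widetilde{R'}$, so $I'J'\widetilde{R'} = I'J'$. Hence $m/x' \in I'J'$.

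Second, the value depends only on the class of $a/x$ modulo $IJ$. Suppose $a/x \in IJ$, i.e.\ $a \in xIJ \subseteq IJ^2$. I need $a' \in x'I'J' + {\mathfrak m'}^{\alpha}$. Using $J^2 = xJ$ and $J'^2 = x'J'$, hypothesis (2) gives that $\varphi$ maps $\overline{IJ^2} = \overline{x IJ}$ onto $\overline{x' I'J'}$, modulo ${\mathfrak m'}^{\alpha}$. So $a' = x'b + m$ with $b \in I'J'$ and $m \in {\mathfrak m'}^{\alpha}$. Dividing by $x'$ gives $a'/x' = b + m/x'$, and the first point shows $m/x' \in I'J'$. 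The same argument shows additivity.

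For multiplicativity, take $\frac{a}{x}\cdot\frac{b}{x} = \frac{ab}{x^2}$. Since $ab \in J^2 = xJ$, write $ab = xc$ with $c \in J$, so the product equals $c/x$. On the other side, $a'b' \equiv \varphi(xc) = x'c' \pmod{{\mathfrak m'}^{\alpha}}$. Hence $\frac{a'b'}{x'^2} - \frac{c'}{x'} = \frac{m}{x'^2}$ for some $m \in {\mathfrak m'}^{\alpha} \subseteq I'J'^3$. I need $m/x'^2 \in I'J'$. This holds because $J'^3 = x'^2J'$, which follows from the reduction-number-one identity $J'^2 = x'J'$. This use of the cube in hypothesis (1) is the one I expect to be the delicate point, and it must be checked carefully. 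The unit $1 = x/x$ maps to $1$.

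Finally, the construction applied to $\varphi^{-1}$, with the roles of $x$ and $x'$ swapped, gives an inverse. So $\widetilde\varphi$ is a $k$-algebra isomorphism $\widetilde R/IJ \to \widetilde{R'}/I'J'$ restricting to $\varphi$ modulo $IJ$ on $R/IJ$.
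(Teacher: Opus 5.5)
Your proposal is correct and is essentially the paper's proof: the paper defines $\widetilde\varphi$ as the composite $\psi'^{-1}\circ\varphi_1\circ\psi$, where $\psi\colon \widetilde R/IJ = x^{-1}J/IJ \to J^2/IJ^3$ is multiplication by $x^2$ and $\varphi_1\colon J^2/IJ^3\to J'^2/I'J'^3$ is induced by $\varphi$. This is exactly your map $a/x\mapsto a'/x'$, except that the factorization gives well-definedness and bijectivity for free instead of by your hand checks and symmetric inverse; the cube in hypothesis (1) enters exactly where you use $J'^3 = x'^2J'$, and you should add (as the paper does, citing Huneke) that in dimension one the reduction number is independent of the minimal reduction, so that $J'^2 = x'J'$ holds for your particular $x'$.
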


\begin{proof}
As $I \cdot J^3 \supseteq \mathfrak m^{\alpha}$
and $I' \cdot J'^3 \supseteq \mathfrak m'^{\alpha}$, 
the isomorphism $\varphi$ induces an isomorphism
	$\faktor{R}{I \cdot J^3} \longrightarrow 
    \faktor{R'}{I' \cdot {J'}^3}$,
which by restriction induces an isomorphism
    $ \varphi_1: \faktor{J^2}{I \cdot J^3} \longrightarrow
	\faktor{J'^2}{I' \cdot {J'}^3}. $

Let $(x) \subseteq R$ be a minimal reduction of $J$ 
(it exists because the residue field is 
algebraically closed and hence infinite)
and let $x' \in R'$ be an element with
$x' \pmod{{\mathfrak m'}^{\alpha}} = \varphi(x)$.
Then by assumption (3), $(x)$ and $(x')$ 
are minimal reductions of $J$ and $J'$, respectively,
and by~\cite[page 504]{Hu},
their reduction numbers are~1.
Thus $x J = J^2$ and $x' J' = J'^2$.

For all integers $n \ge 1$,
$x^{-n} J^n = x^{-n} x^{n-1} J = x^{-1} J$,
implying that
$\widetilde R = \bigcup\limits_n x^{-n} J^n = 
R + x^{-1}J = x^{-1} \cdot J$,
and similarly $\widetilde{R'} = {x'}^{-1} \cdot J'$.
The $R$-submodule $I \cdot J^2$ of $\widetilde{R}$
is an ideal in $\widetilde{R}$
because $x^{-1} \cdot J \cdot (I \cdot J^2) 
= x^{-1} \cdot I \cdot x \cdot J^2
= I \cdot J^2$.
Similarly, $I' \cdot J'^2$ is an ideal in $\widetilde{R'}$.

Define
$\psi : \faktor{\widetilde R}{I \cdot J} \to 
\faktor{J^2}{I \cdot J^3}$
to be the natural composition
$$
\faktor{\widetilde R}{I \cdot J}
= \faktor{x^{-1} J}{I \cdot J}
\cong
 \faktor{x^2 x^{-1} J}{x^2 \cdot I \cdot J}
\cong
\faktor{J^2}{I \cdot J^3},
$$
where the last isomorphism is given by multiplication 
with $x^2$, and similarly define
$\psi' : \faktor{\widetilde R'}{I' \cdot J'} 
\to \faktor{J'^2}{I' \cdot J'^3}$.
So $\psi$ and $\psi'$ are $R$-module isomorphisms.

Set
$\widetilde\varphi=\psi'^{-1} \circ \varphi_1 \circ \psi :
\faktor{\widetilde R}{I \cdot J} \to 
\faktor{\widetilde R'}{I' \cdot J'}$.
Then $\widetilde\varphi$ is an $R$-module isomorphism
as it is a composition of $R$-module isomorphisms.

We prove next that $\widetilde\varphi$ is a 
$k$-algebra isomorphism. It suffices to prove that 
$\widetilde \varphi$ is multiplicative.
Let $y_1, y_2 \in \widetilde R = x^{-1} J$.
Write $y_i = x^{-1} z_i$ for some $z_i \in J$.
Using that
$\varphi_1$ is a restriction of the 
multiplicative homomorphism $\varphi$,
and with standard abuse of notation,
$$ \begin{aligned}
\widetilde \varphi(y_1 \cdot y_2)
&= \psi'^{-1} \circ \varphi_1 \circ \psi 
\left(\frac{z_1}{x} \cdot \frac{z_2}{x}\right) 
= \psi'^{-1} (\varphi_1(z_1 \cdot z_2)) \\
& = \psi'^{-1} (\varphi(z_1 \cdot z_2))  
= \psi'^{-1} \left(\varphi(z_1) \cdot \varphi(z_2)\right) 
= \frac{\varphi(z_1)}{x'} \cdot \frac{\varphi(z_2)}{x'}  \\
& = \frac{x' \varphi(z_1)}{x'^2} \cdot 
\frac{x' \varphi(z_2)}{x'^2}   
= \frac{\varphi(x) \varphi(z_1)}{x'^2} \cdot 
\frac{\varphi(x)\varphi(z_2)}{x'^2}  \\
& = \frac{\varphi(x z_1)}{x'^2} \cdot 
\frac{\varphi(x z_2)}{x'^2}  
= \psi'^{-1}(\varphi(x z_1)) \cdot 
\psi'^{-1}(\varphi(x z_2))  \\
& = \psi'^{-1}(\varphi(\psi(y_1))) \cdot 
\psi'^{-1}(\varphi(\psi(y_2))) \\
&= \widetilde \varphi(y_1) \cdot \widetilde \varphi(y_2).
\end{aligned} $$

Finally,
for any $y \in R \subseteq \widetilde R$,
	$$ \begin{aligned}
	\widetilde \varphi(y) & = \psi'^{-1} \circ \varphi_1 
    \circ \psi(y)
	= \psi'^{-1} \circ \varphi_1(y \cdot x^2)
	= \psi'^{-1} (\varphi(y) x'^2) \\
	& = \varphi(y)\pmod{I \cdot J} \end{aligned} $$
\end{proof}

\begin{lemma}\drlabel{lem:hironaka1}
Let $(R, \mathfrak m, k)$ be as in the set-up with 
$e(R) = 2$. Let $\widetilde R$ be the quadratic 
transforms of $R$ with respect to ${\mathfrak m}$
and let $\widetilde{\mathfrak m}$ be the maximal 
ideal of $\widetilde R$.
Then
\begin{enumerate}
\item
$\mathfrak m$ is two-generated;
write $\mathfrak m = (x_1, x_2)$.
We may choose $x_1$ to have $t$-adic order~$2$,
so it generates a minimal reduction of $\mathfrak m$
with reduction number~$1$;
$x_2$ may be chosen to have order $2 \delta(R) + 1$.
Then $\widetilde R = k[[x_1, \frac {x_2}{x_1}]]$
and $\widetilde{\mathfrak m} = 
(x_1, \frac {x_2}{x_1}) \widetilde R$.

\item
For every integer $i \ge 1$, $\mathfrak m^i$
is a common ideal of $R$ and $\widetilde R$.

\item
For every integer $i \ge 1$,
$\mathfrak m^{i} \subseteq \widetilde{\mathfrak m}^{i}$.
If $\delta(R) \ge 2$, then also
$\widetilde{\mathfrak m}^{i+1} \subseteq \mathfrak m^{i}$.
\end{enumerate}
\end{lemma}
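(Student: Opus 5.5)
The plan is to first pin down the value semigroup. Since $e(R)=2$, the element of order $2$ lies in $V$, so $V=\langle 2, b\rangle$ for some odd $b$. Counting gaps gives $b=2\delta(R)+1$ and $c(V)=2\delta(R)$, so by Lemma~\ref{lem:delta-genus} $\mathfrak C_R=t^{2\delta(R)}k[[t]]$. Everything below rests on this description.

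For (1), pick $x_1\in R$ with $v(x_1)=2$. It generates a minimal reduction of $\mathfrak m$, and its reduction number is at most $e(R)-1=1$ (\cite{Ro}, \cite[page 504]{Hu}). Hence $\mathfrak m^2=x_1\mathfrak m$ and $V(\mathfrak m^2)=2+V(\mathfrak m)$. Then $V(\mathfrak m)\setminus V(\mathfrak m^2)$ is the set of nonzero $s\in V$ with $s-2\notin V\setminus\{0\}$, which is exactly $\{2,2\delta(R)+1\}$. By Theorem~\ref{thm:exist-unique}, $\mathrm{edim}(R)=2$, and any $x_1,x_2$ of orders $2$ and $2\delta(R)+1$ satisfy $R=k[[x_1,x_2]]$, so $\mathfrak m=(x_1,x_2)$.

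For the quadratic transform I would use $\mathfrak m^n=x_1^{n-1}\mathfrak m$. This gives $\widetilde R=\bigcup_n x_1^{-n}\mathfrak m^n=x_1^{-1}\mathfrak m=R+R\cdot\frac{x_2}{x_1}$, the same computation as in the proof of Lemma~\ref{lem:lifting-blowup}. Thus $\widetilde R$ is module-finite over the complete ring $R$, hence complete local. Set $y=\frac{x_2}{x_1}$; since $v(y)=2\delta(R)-1\ge 1$ (because $\delta(R)\ge e(R)-1=1$), $y$ lies in the maximal ideal. It follows that $\widetilde R=k[[x_1,y]]$ with $\widetilde{\mathfrak m}=(x_1,y)\widetilde R$.

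For (2), compute $\widetilde R\,\mathfrak m^i=x_1^{-1}\mathfrak m\cdot\mathfrak m^i=x_1^{-1}\mathfrak m^{i+1}=x_1^{-1}x_1\mathfrak m^i=\mathfrak m^i$.

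For (3), the inclusion $\mathfrak m^i\subseteq\widetilde{\mathfrak m}^i$ is immediate from $\mathfrak m\subseteq\widetilde{\mathfrak m}$. For the reverse, $\widetilde{\mathfrak m}^{i+1}$ is generated as a $\widetilde R$-ideal by the monomials $x_1^ay^b$ with $a+b=i+1$. Since $\mathfrak m^i$ is a $\widetilde R$-ideal by (2), it suffices to place each such monomial in $\mathfrak m^i=x_1^{i-1}\mathfrak m$.
\begin{itemize}
\item If $b=0$, then $x_1^{i+1}\in\mathfrak m^i$ trivially.
\item If $b=1$, then $x_1^iy=x_1^{i-1}x_2\in x_1^{i-1}\mathfrak m$.
\item If $b\ge2$, I use valuations. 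Here $v(x_1^ay^b)=2(i+1)+b(2\delta(R)-3)$, and since $\delta(R)\ge2$ gives $2\delta(R)-3\ge1$, this is at least $2i+4\delta(R)-4\ge 2(i-1)+2\delta(R)$. Every element of $\widetilde R\subseteq k[[t]]$ of that order lies in $t^{2(i-1)+2\delta(R)}k[[t]]=x_1^{i-1}\mathfrak C_R\subseteq x_1^{i-1}\mathfrak m=\mathfrak m^i$.
\end{itemize}
The identity $t^{2(i-1)}k[[t]]\cdot\mathfrak C_R = x_1^{i-1}\mathfrak C_R$ holds because $x_1$ is $t^2$ times a unit of $k[[t]]$ and $\mathfrak C_R$ is a $k[[t]]$-ideal.

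The main obstacle is this last inclusion. A naive induction of the form $\widetilde{\mathfrak m}\,\mathfrak m^{i-1}\subseteq\mathfrak m^i$ fails, since already $x_2\in\mathfrak m\widetilde{\mathfrak m}$ but $x_2\notin\mathfrak m^2$. So the case split on the $y$-exponent $b$, combined with the conductor description $\mathfrak C_R=t^{2\delta(R)}k[[t]]$, is the step where care is needed. The hypothesis $\delta(R)\ge2$ enters exactly there, to make $2\delta(R)-3\ge1$.
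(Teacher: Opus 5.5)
Your proof is correct and follows essentially the same route as the paper. Both arguments use Herzog--Kunz generators $x_1,x_2$ of orders $2$ and $2\delta(R)+1$, the description $\widetilde R = R[\frac{x_2}{x_1}]$, and the identity $\mathfrak m^{n} = x_1^{n-1}\mathfrak m$ for (2). For (3), both split on the exponent of $\frac{x_2}{x_1}$ and handle exponent $\ge 2$ by a valuation count that lands in a power of $x_1$ times $\mathfrak C_R = t^{2\delta(R)}k[[t]]$.

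There are two minor differences. You derive $V=\langle 2,2\delta(R)+1\rangle$ and the Herzog--Kunz sequence explicitly, which the paper only asserts. You also bound the whole monomial $x_1^a y^b$ against $x_1^{i-1}\mathfrak C_R$, where the paper instead reduces to showing $(\frac{x_2}{x_1})^a\in\mathfrak m^{a-1}$.
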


\proof
Since $e(R) = 2$, we have that $\mathrm{embdim}(R) = 2$, 
and therefore 
$R = k[[x_1, x_2]] \subseteq k[[t]]$ with 
Herzog--Kunz generators $x_1, x_2$
with $v(x_1) = 2$ and $v(x_2)=2 \cdot \delta(R)+1$,
$\mathfrak m$ has reduction number $1$ with 
minimal reduction $(x_1)$,
$\mathfrak C_R = (x_1^{\delta(R)}, x_2) = 
(t^{2 \cdot \delta(R)}, t^{2 \cdot
\delta(R)+1})$,
and
$\widetilde R = R \left[\frac{\mathfrak m}{x_1} \right]
    = R \left[\frac{x_2}{x_1} \right]
    = k\left[\left[x_1, \frac {x_2}{x_1}\right]\right]$.
These facts verify~(1).

Proof of (2):
$\mathfrak m^i$ is an ideal of $R$ and $\widetilde{R}$
because for any integer $l > 0$,
$$
\frac {\mathfrak m^l}{x_1^l} \cdot \mathfrak m^i
= \frac {\mathfrak m^{l+i}}{x_1^l}
= \frac {x_1^{l+i-1} \cdot \mathfrak m}{x_1^l}
= x_1^{i-1} \mathfrak m
\subseteq \mathfrak m^i.
$$

Proof of (3):
We have $\widetilde{\mathfrak m} = (x_1, \frac {x_2}{x_1})
\widetilde R$. Clearly $\mathfrak m^{i}
\subseteq \widetilde{\mathfrak m}^{i}$.
To prove that $\widetilde{\mathfrak m}^{i+1}
\subseteq \mathfrak m^{i}$ if $\delta(R) \ge 2$,
we need to show that for all non-negative integers 
$a \le i+1$, $x_1^{i+1-a} \left(\frac {x_2}{x_1}\right)^a 
\in \mathfrak m^i$. This is true for $a = 0$ or 
$a = 1$, so we may assume that $a \ge 2$.
It suffices to prove that $\left(\frac {x_2}{x_1}\right)^a 
\in \mathfrak m^{a-1}$.
We have that
$v\left(\left(\frac {x_2}{x_1}\right)\right) = (2n-1)a 
= 2(a-1) + 2n + (2n-3)(a-1) - 1$,
where $(2n-3)(a-1) - 1 \ge 0$.
Thus we can write
$\left(\frac {x_2}{x_1}\right)^a = x_1^{a-1} c$
for some $c \in t^{2n + (2n-3)(a-1) - 1} k[[t]]$,
so for some $c \in \mathfrak C_R$.
This finishes the proof of (3).
\qed

\begin{lemma}\drlabel{lem:hironaka2}
Let $(R, \mathfrak m, k)$ and $(R',\mathfrak{m'}, k')$ 
be as in the set-up with $e(R) = 2$ and $\delta(R) > 1$.
Let $(\widetilde R, \widetilde{\mathfrak m})$
(resp. $(\widetilde{R'}, \widetilde{\mathfrak m'})$)
be the quadratic transforms of $R$ (resp. $R'$)
with respect to ${\mathfrak m}$ (resp. ${\mathfrak m'}$).
Let $j \ge 2 \delta(R) + 1$
be such that there exists a $k$-algebra isomorphism
$$
\varphi: \faktor{R}{\mathfrak{m}^{j+2}} \longrightarrow
\faktor{R'}{{\mathfrak{m}'}^{j+2}}.
$$
Then there exists a $k$-algebra isomorphism
$$
\widetilde \varphi : \faktor{\widetilde{R}}{ 
\mathfrak{m}^j} \longrightarrow
\faktor{\widetilde{R'}}{{\mathfrak{m}'}^j}
$$
that restricts to the isomorphism
$\faktor{R}{\mathfrak{m}^j} \longrightarrow 
\faktor{R'}{{\mathfrak{m}'}^j}$
induced by $\varphi$.

Furthermore,
$\widetilde \varphi$ restricts to the $k$-algebra 
isomorphism
$$
\faktor{\widetilde{R}}{\widetilde{\mathfrak{m}}^j} 
\longrightarrow
\faktor{\widetilde{R'}}{{\widetilde{\mathfrak{m}'}}^j},
$$
which agrees with the $k$-algebra isomorphism
$\faktor{R}{\mathfrak{m}^{j-1}} \longrightarrow 
\faktor{R'}{{\mathfrak{m}'}^{j-1}}$
induced by $\varphi$.
\end{lemma}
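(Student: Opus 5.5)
The plan is to apply Lemma~\ref{lem:lifting-blowup} with $J = \mathfrak m$, $J' = \mathfrak m'$, $I = \mathfrak m^{j-1}$, $I' = {\mathfrak m'}^{j-1}$ and $\alpha = j+2$. With these choices $I\cdot J = \mathfrak m^j$ and $I'\cdot J' = {\mathfrak m'}^j$, so the conclusion of that lemma gives exactly an isomorphism $\widetilde\varphi:\widetilde R/\mathfrak m^j \to \widetilde{R'}/{\mathfrak m'}^j$ restricting to $\varphi \bmod \mathfrak m^j$.

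First I check the hypotheses of Lemma~\ref{lem:lifting-blowup}. For (1), $I\cdot J^3 = \mathfrak m^{j+2}$, and likewise for $R'$. For (2), $\varphi$ maps $\mathfrak m^i/\mathfrak m^{j+2}$ onto ${\mathfrak m'}^i/{\mathfrak m'}^{j+2}$, as in Lemma~\ref{lem:e-H-mod}~i). For (3), $e(R)=2$ and Lemma~\ref{lem:hironaka1}(1) give that $\mathfrak m$ has reduction number $1$. Lemma~\ref{lem:e-H-mod}, applied with $d=\delta(R)\ge e(R)-1$ and $j+1 \ge 2\delta(R)+1$, gives $e(R')=2$, so $\mathfrak m'$ also has reduction number $1$ (via Lemma~\ref{lem:hironaka1} for $R'$). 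Lemma~\ref{lem:e-H-mod}~iv) shows that a lift $x'$ of $\varphi(x)$ generates a minimal reduction of $\mathfrak m'$. For (4), Lemma~\ref{lem:hironaka1}(2) says $\mathfrak m^j$ is a common ideal of $R$ and $\widetilde R$, and the same holds for $R'$. The only small point is that $\widetilde R$ is the blowup algebra $x^{-1}\mathfrak m$, which is the quadratic transform. This yields the first assertion.

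For the second assertion I use Lemma~\ref{lem:hironaka1}(3). This requires $\delta(R)\ge 2$ and also $\delta(R')\ge 2$. For the latter, I first show that the Hilbert functions agree (Lemma~\ref{lem:e-H-mod}~iii)), or more directly that the conductors correspond (Lemma~\ref{lem:phi-cond-mod-preserve} with $e=2$, $j\ge 2\delta(R)+1$). This gives $c(R')=c(R)$, so $\delta(R')=\delta(R)$, since $c=2\delta$ when $e=2$. Then $\mathfrak m^j\subseteq\widetilde{\mathfrak m}^j$, so $\widetilde\varphi$ descends to $\widetilde R/\widetilde{\mathfrak m}^j\to \widetilde{R'}/\widetilde{\mathfrak m'}^j$. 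For this I need that $\widetilde\varphi(\widetilde{\mathfrak m}^j/\mathfrak m^j)=\widetilde{\mathfrak m'}^j/{\mathfrak m'}^j$. This holds because $\widetilde\varphi$ is a $k$-algebra isomorphism of local Artinian rings: it maps the maximal ideal $\widetilde{\mathfrak m}/\mathfrak m^j$ onto $\widetilde{\mathfrak m'}/{\mathfrak m'}^j$, hence maps $j$-th powers onto $j$-th powers. The result is an isomorphism that is bijective, by symmetry with the inverse.

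Finally, the compatibility with $\varphi \bmod \mathfrak m^{j-1}$ comes from $\widetilde{\mathfrak m}^{j}\subseteq\mathfrak m^{j-1}$ (Lemma~\ref{lem:hironaka1}(3) with $i=j-1$). Hence $R/(R\cap\widetilde{\mathfrak m}^j)$ sits between $R/\mathfrak m^j$ and $R/\mathfrak m^{j-1}$, and $\widetilde\varphi$ restricted to $R$ agrees with $\varphi$ modulo $\mathfrak m^j$, hence modulo $\mathfrak m^{j-1}$. I expect the main obstacle to be this step: making the restriction statement precise and confirming that the hypotheses of Lemma~\ref{lem:hironaka1} transfer to $R'$, namely $e(R')=2$ and $\delta(R')\ge 2$.
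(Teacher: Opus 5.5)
Your proposal is correct and follows the paper's proof essentially step for step: you apply Lemma~\ref{lem:lifting-blowup} with $I=\mathfrak m^{j-1}$, $J=\mathfrak m$, $\alpha=j+2$, then pass to $\widetilde{\mathfrak m}^j$ and compare modulo $\mathfrak m^{j-1}$ via Lemma~\ref{lem:hironaka1}(3); your Artinian-local argument replaces the paper's check on the generators $x_1$, $x_2/x_1$ and is equally fine. You are more careful than the paper in verifying $\delta(R')\ge 2$, but only your conductor route via Lemma~\ref{lem:phi-cond-mod-preserve} establishes this, not the Hilbert-function route, since for $e=2$ the Hilbert function is always $1,2,2,\dots$ independently of $\delta$.
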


\proof
By Lemma~\ref{lem:e-H-mod},
$k' = k$ (up to isomorphism),
$e(R') = e(R) = 2$,
the preimage of $\varphi(\mathfrak m^i)$ in $R'$
is $\mathfrak m'^i$ for all $i \le j+2$,
and for any principal reduction $(x_1)$ of $\mathfrak m$,
any preimage $x_1'$ in $R'$ of $\varphi(x_1)$
generates a minimal reduction of $\mathfrak m'$.
Also,
setting $\mathfrak m = (x_1, x_2)$,
we have that $\mathfrak m' = (x_1', x_2')$,
where $x_i'$ is any preimage in $R'$ of $\varphi(x_i)$.

By Lemma~\ref{lem:hironaka1},
$\widetilde R = k[[x_1, \frac {x_2}{x_1}]]$,
$\widetilde{R'} = k[[x_1', \frac {x_2'}{x_1'}]]$,
$\mathfrak m$ and $\mathfrak m'$ have reduction number~$1$,
and for all positive integers $i$,
$\mathfrak m^i$ is a common ideal of $R$ and $\widetilde R$
and $\mathfrak m'^i$ is a common ideal of $R'$ and 
$\widetilde R'$. The powers of $\mathfrak m$ and 
$\mathfrak m'$ also have reduction number~$1$.

Rewrite $\varphi$ as
$\varphi: \faktor{R}{\mathfrak{m}^{j-1} \cdot
\mathfrak{m}^{3}}\longrightarrow \faktor{R'}
{{\mathfrak{m}'}^{j-1} \cdot {\mathfrak{m}'}^3}$.
Thus by Lemma~\ref{lem:lifting-blowup},
$\varphi$ induces a $k$-algebra isomorphism
	$$ \widetilde{\varphi}:
	\faktor{\widetilde{R}}{\mathfrak{m}^{j} } =
	\faktor{\widetilde{R}}{\mathfrak{m}^{j-1} 
    \cdot \mathfrak{m}} \longrightarrow
	\faktor{\widetilde{R'}}{{\mathfrak{m}'}^{j-1} 
    \cdot \mathfrak{m}'}
	= \faktor{\widetilde{R'}}{{\mathfrak{m}'}^{j}} $$
which restricts to $\varphi$ modulo the $j$th powers:
$\faktor{R}{\mathfrak{m}^{j}} \longrightarrow
\faktor{R'}{{\mathfrak{m}'}^{j}}$.

Since $\widetilde \varphi$ takes $x_1$ to $x_1'$ and 
$\frac {x_2}{x_1}$ to $\frac {x_2'}{x_1'}$,
it restricts to an isomorphism 
$\faktor{\widetilde{R}}{\widetilde{\mathfrak{m}}^j} 
\longrightarrow
\faktor{\widetilde{R'}}{{\widetilde{\mathfrak{m}'}}^j}$.
By Lemma~\ref{lem:hironaka1},
$\widetilde{\mathfrak{m}}^j \subseteq \mathfrak m^{j-1}$
and $\widetilde{\mathfrak{m}'}^j \subseteq 
\mathfrak m'^{j-1}$,
so that the last restriction restricts to $\varphi$ 
modulo $\mathfrak m^{j-1}$.
\qed

\medskip

Hironaka, in [Hi], top of page 156, provides a
brief outline of an argument to prove the next theorem for
$3 \delta(R) + 1$ instead of $2 \delta(R) +1$.

\begin{satz}[Hironaka's equivalence of singularities]\drlabel{thm:hironaka}
Let $(R, \mathfrak{m}, k)$ and $(R', \mathfrak{m}', k')$ be
complete reduced and irreducible domains
with algebraically closed residue fields $k$, $k'$.
Assume that for some $j \geq 2 \delta(R) + 1$
there exists a $k$-algebra isomorphism
	$$ \varphi: \faktor{R}{\mathfrak{m}^{j+1}}
    \longrightarrow
	\faktor{R'}{{\mathfrak{m}'}^{j+1}}. $$
Then there exists a $k$-algebra-isomorphism
	$$ \psi: R \longrightarrow R' $$
such that
	$$ \psi \cong \varphi : \faktor{R}{\mathfrak{m}^{j+1- 2 \delta(R)}}
	\longrightarrow \faktor{R'}{{\mathfrak{m}'}^{j+1 - 2 \delta(R)}}. $$
\end{satz}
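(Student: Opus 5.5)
We argue by induction on $\delta(R)$, using blowups (quadratic transforms) to reduce the singularity. If $\delta(R)=0$, then $R$ and $R'$ are regular: by Lemma~\ref{lem:e-H-mod} we get $e(R')=e(R)=1$, so $R\cong k[[t]]\cong R'$. In this case any lift of $\varphi(\overline t)$ is a regular parameter of $R'$, and the map $t\mapsto$ (this lift) is an isomorphism $\psi$ that agrees with $\varphi$ modulo $\mathfrak m^{j+1}$.

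For $\delta(R)\ge 1$ the natural tool is Lemma~\ref{lem:lifting-blowup}, applied to a suitable pair $(I,J)$. Blowing up $J$ gives a ring $\widetilde R$ with $\delta(\widetilde R)<\delta(R)$, and $\varphi$ lifts to an isomorphism of $\widetilde R$ and $\widetilde{R'}$ modulo $I\cdot J$. Let $e=e(R)$. I would split into the cases $e=2$ and $e\ge 3$.

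\emph{Case $e=2$.} Take $J=\mathfrak m$, $I=\mathfrak m^{j-2}$, as in Lemma~\ref{lem:hironaka2} with $j$ replaced by $j-1$. This gives an isomorphism $\widetilde R/\widetilde{\mathfrak m}^{j-1}\cong \widetilde{R'}/\widetilde{\mathfrak m'}^{j-1}$ compatible with $\varphi$ modulo $\mathfrak m^{j-2}$. Here $\delta(\widetilde R)=\delta(R)-1$, since $v(x_2/x_1)=2\delta(R)-1$. The inductive hypothesis then needs $j-2\ge 2\delta(\widetilde R)+1=2\delta(R)-1$, which holds. Induction yields $\widetilde\psi:\widetilde R\to\widetilde{R'}$ agreeing with the lifted map modulo $\widetilde{\mathfrak m}^{\,j-2\delta(\widetilde R)}=\widetilde{\mathfrak m}^{\,j+2-2\delta(R)}$, and this ideal lies in $\mathfrak m^{j+1-2\delta(R)}$ by Lemma~\ref{lem:hironaka1}(3). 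The loss per step is exactly $2$ while $\delta$ drops by $1$, which matches the bound.

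It remains to show that $\widetilde\psi$ maps $R$ onto $R'$. It sends $x_1\mapsto \widetilde\psi(x_1)$ and $x_2=x_1\cdot\frac{x_2}{x_1}\mapsto$ an element that agrees with lifts of $\varphi(x_i)$ modulo $\mathfrak m'^{\,j+1-2\delta(R)}$. These images have the Herzog--Kunz valuations $2$ and $2\delta(R)+1$, because $j+1-2\delta(R)\ge 2$ and elements of $\mathfrak m'^2$ have valuation at least $4$. The valuation $2\delta(R)+1$ needs a check: we must ensure the perturbation lies in $\mathfrak m'^2$ but does not disturb the valuation $2\delta(R)+1$. This is handled by noting $\varphi(x_2)$ is defined modulo $\mathfrak m'^{j+1}\subseteq\mathfrak C_{R'}$, so the image still lies in $R'$ and generates $\mathfrak m'$ modulo $\mathfrak m'^2$. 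By Theorem~\ref{thm:exist-unique} together with Nakayama, $\widetilde\psi(R)=k[[\widetilde\psi(x_1),\widetilde\psi(x_2)]]=R'$, applying the same argument to $\widetilde\psi^{-1}$ if needed. Finally, $\widetilde\psi|_R$ agrees with $\varphi$ modulo $\mathfrak m^{j+1-2\delta(R)}$.

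\emph{Case $e\ge 3$.} Take $J=\mathfrak C_R$, which has reduction number $1$. Lemma~\ref{lem:phi-cond-mod-preserve} and Remark~\ref{rem:cond-2delta-m} show that $\varphi$ preserves the conductor and its minimal reductions, and that $\mathfrak C_R^3\supseteq \mathfrak m^{j}$ for $j\ge 2\delta(R)$. Take $I=R$, so that $IJ=\mathfrak C_R$ is a common ideal of $R$ and $\widetilde R=\overline R=k[[t]]$, and both blowups are $k[[t]]$. Lemma~\ref{lem:lifting-blowup} then gives an isomorphism $k[[t]]/t^{c}\to k[[t]]/t^{c}$ extending $\varphi$ modulo $\mathfrak C_R$, with $c=c(R)=c(R')$. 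I would lift it to an automorphism $\psi$ of $k[[t]]$ by choosing a lift of the image of $t$. This $\psi$ maps $\mathfrak C_R=t^c k[[t]]$ onto $\mathfrak C_{R'}$, and $R/\mathfrak C_R$ onto $R'/\mathfrak C_{R'}$ because the induced map restricts to $\varphi$ modulo the conductor. Hence $\psi(R)=R'$, and $\psi\equiv\varphi$ modulo $\mathfrak C_R$. Since $\mathfrak m^{j+1-2\delta(R)}\subseteq\mathfrak m\subseteq$? — this is not automatic. Instead I note that $\mathfrak m^{s}\subseteq\mathfrak C_R$ once $s e\ge c$, and $c\le 2\delta(R)$ with $e\ge3$ gives this for $s\ge 1$ only when $c\le 3$. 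So this is where care is needed.

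\textbf{Main obstacle.} The main obstacle is getting the agreement exponent $j+1-2\delta(R)$ in case $e\ge3$. The conductor trick only gives agreement modulo $\mathfrak C_R$, which is weaker than the claimed agreement modulo $\mathfrak m^{j+1-2\delta(R)}$. I would try to fix this by using $I=\mathfrak m^{\,j+1-2\delta(R)}$ (or a comparable power) instead of $I=R$, checking hypothesis (1) via $\mathfrak m^{j+1-2\delta(R)}\mathfrak C_R^3\supseteq\mathfrak m^{j+1}$. That inclusion needs $\mathfrak C_R^3\supseteq \mathfrak m^{2\delta(R)}$, which Remark~\ref{rem:cond-2delta-m} provides when $e\ge 3$. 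Hypothesis (4) holds because $\mathfrak m^{s}\mathfrak C_R$ is a $k[[t]]$-ideal. The resulting isomorphism of $k[[t]]$ modulo $\mathfrak m^{s}\mathfrak C_R$ lifts to an automorphism of $k[[t]]$ compatible with $\varphi$ modulo $\mathfrak m^{s}\mathfrak C_R\subseteq\mathfrak m^{s}$. It preserves $R$ as above, which gives the theorem.
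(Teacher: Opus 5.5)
Your final version of the case $e\ge 3$ (taking $I=\mathfrak m^{j+1-2\delta(R)}$ and $J=\mathfrak C_R$ in Lemma~\ref{lem:lifting-blowup}, checking (1) via Remark~\ref{rem:cond-2delta-m}, then lifting to an automorphism of $k[[t]]$) is the paper's argument. Your surjectivity argument via $\mathfrak C_R\mapsto\mathfrak C_{R'}$ is a valid substitute for the paper's Nakayama argument.

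The case $e=2$, however, has a genuine gap, hidden by an off-by-one slip. Lemma~\ref{lem:lifting-blowup} with $I=\mathfrak m^{j-2}$, $J=\mathfrak m$ turns $\varphi$ modulo $\mathfrak m^{j+1}$ into an isomorphism modulo $\mathfrak m^{j-1}$, hence modulo $\widetilde{\mathfrak m}^{j-1}=\widetilde{\mathfrak m}^{(j-2)+1}$. The induction hypothesis (with $j-2$ and $\delta(R)-1$) then gives agreement only modulo $\widetilde{\mathfrak m}^{(j-2)+1-2(\delta(R)-1)}=\widetilde{\mathfrak m}^{j+1-2\delta(R)}$, not $\widetilde{\mathfrak m}^{j+2-2\delta(R)}$. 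Lemma~\ref{lem:hironaka1}(3) converts this only into agreement modulo $\mathfrak m^{j-2\delta(R)}$. The lost power is real: for $R'=k[[t^2,t^{2\delta+1}]]$ and $s\ge 2$, the element $t^{2s+2\delta-3}$ lies in $\widetilde{\mathfrak m'}^{s}\cap R'$ but not in $\mathfrak m'^{s}$. So each blow-up costs $2$ in the modulus but $3$ in the agreement exponent. Iterating, you get roughly $\mathfrak m^{j+2-3\delta(R)}$, essentially Hironaka's original loss, which is vacuous at $j=2\delta(R)+1$ once $\delta(R)\ge 2$.

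What is missing is the paper's step $(*)$. Before blowing up, you must upgrade $\varphi$ to an isomorphism $\varphi^*$ modulo $\mathfrak m^{j+2}$ (one power \emph{higher}) that still agrees with $\varphi$ modulo $\mathfrak m^{j+1-2\delta(R)}$. Write $R=k[[X,Y]]/(f)$ and $R'=k[[X,Y]]/(g)$ with $f=g-r$, $r\in(X,Y)^{j+1}$. One then needs $a,b\in(X,Y)^{j+1-2\delta}$ and a unit $u$ with $f(X+a,Y+b)\equiv u\cdot g$ modulo $(X,Y)^{j+2}$. The paper constructs these explicitly from the normal forms $Y^2-X^{2\delta+1}$ (characteristic $\neq 2$, with a separate computation when the characteristic is $2\delta+1$) and $Y^2+X^{2\delta+1}$, $Y^2+YX^i+X^{2\delta+1}$ (characteristic $2$). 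Only then does Lemma~\ref{lem:hironaka2}, with the original $j$, give an isomorphism modulo $\widetilde{\mathfrak m}^{j}$, and induction yields agreement modulo $\widetilde{\mathfrak m}^{j+2-2\delta(R)}\subseteq\mathfrak m^{j+1-2\delta(R)}$.

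There are also three smaller issues:
\begin{itemize}
\item Lemmas~\ref{lem:hironaka1}(3) and~\ref{lem:hironaka2} require $\delta(R)\ge 2$. The passage from $\delta(R)=1$ must therefore be done directly, using the isomorphism modulo $\mathfrak m^{j-1}=t^{2j-2}k[[t]]$ on $\widetilde R=k[[t]]$ rather than modulo $\widetilde{\mathfrak m}^{j-1}$.
\item You invoke Lemma~\ref{lem:hironaka2} with $j-1$, which is below its stated hypothesis, so this needs justification.
\item The equality $v(\widetilde\psi(x_2))=2\delta+1$ holds because $\widetilde\psi$ extends to an automorphism of $k[[t]]$, not by your $\mathfrak m'^2$ argument.
\end{itemize}
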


\begin{proof}
By Lemma~\ref{lem:e-H-mod},
$k' = k$ (up to isomorphism),
$e(R') = e(R)$,
and the preimage of $\varphi(\mathfrak m^i)$ in $R'$
is $\mathfrak m'^i$ for all $i$.

\medbreak
We first assume that $e = e(R) \geq 3$.
In this situation, we can follow Hironaka's strategy quite closely.

By Lemma~\ref{lem:phi-cond-mod-preserve},
the preimage of $\varphi(\mathfrak C_R)$ in $R'$ is 
$\mathfrak C_{R'}$.
By Remark~\ref{rem:cond-2delta-m},
we have
$\mathfrak{m}^{2 \delta(R)} \subseteq {\mathfrak C_R}^3$
and
${\mathfrak{m}'}^{2 \delta(R)} \subseteq 
{\mathfrak C_{R'}}^3$.
Hence with $\beta = j - 2 \delta (R) + 1$ and $\alpha = j+1$,
$\mathfrak m^{\beta} \cdot \mathfrak C^3 \supseteq \mathfrak{m}^{\alpha}$
and
${\mathfrak m'}^{\beta} \cdot {\mathfrak C'}^3 
\supseteq {\mathfrak{m}'}^{\alpha}$.
Thus $\varphi$ induces an isomorphism
    $$\varphi: \faktor{R}{\mathfrak m^{\beta}
    \cdot \mathfrak C_R^3} \longrightarrow
    \faktor{R'}{{\mathfrak m'}^{\beta} \cdot
    {\mathfrak C^3_{R'}}}.$$
As the reduction number of the conductor is always~$1$,
the assumptions of Lemma~\ref{lem:lifting-blowup} 
are satisfied with $\alpha = j+1$, 
$I = \mathfrak m^\beta$ and $J = \mathfrak C_R$.
Thus with $\widetilde R$ (resp. $\widetilde{R'}$)
being the blowup of $\mathfrak C_R$ in $R$
(resp. of $\mathfrak C_{R'}$ in $R'$),
we get an isomorphism
	$$ \widetilde{\varphi}:
	\faktor{\widetilde{R}}{\mathfrak m^{\beta} 
    \cdot \mathfrak C_R} \longrightarrow
	\faktor{\widetilde{R'}}{{\mathfrak m'}^{\beta} \cdot
    {\mathfrak C_{R'}}}, $$
which restricts
on
	$\faktor{{R}}{\mathfrak m^{\beta} \cdot \mathfrak C_R}
	\longrightarrow
	\faktor{{R'}}{{\mathfrak m'}^{\beta} \cdot 
    {\mathfrak C_{R'}}}$
to the same isomorphism as $\varphi$.

But $\widetilde{R} = y^{-1} \cdot \mathfrak C_R =
\overline{R} = k[[t]]$, and $\widetilde{R} =
{y'}^{-1} \cdot \mathfrak C_{R'}
= \overline{R'} = k[[t]]$ for some $y \in \mathfrak C_R$
and $y' \in \mathfrak C_{R'}$.
Let $\widetilde \varphi(t + \mathfrak m^\beta 
\cdot \mathfrak C_R^3)
= z + {\mathfrak m'}^{\beta} \cdot {\mathfrak C_{R'}}$
for some $z \in k[[t]]$. Necessarily 
$z \in (t) \cdot k[[t]] \setminus (t^2) \cdot k[[t]]$.
Let
	$$ \Psi: k[[t]] \longrightarrow k[[t]] $$
be the $k$-algebra isomorphism defined by
$\Psi(t) = z$. Then $\Psi$ lifts $\widetilde{\varphi}$.

Let $x_1, \ldots, x_n$ be a set of Herzog--Kunz generators 
of $R$, let $y_i = \Psi(x_i)$,
and let $x_i' \in R'$ satisfy
$x_i' \pmod{{\mathfrak m'}^{j+1}} = 
\varphi(x_i + \mathfrak m^{j+1})$.
Since $\varphi$ restricts to $\mathfrak m$
as an isomorphism onto $\mathfrak m'$,
by Nakayama's lemma, $x_1', \ldots, x_n'$
is a minimal set of generators of $\mathfrak m'$.
The diagram
	$$ \begin{tikzcd}
	\faktor{R}{\mathfrak m^{\beta} \cdot \mathfrak C_R}
	\arrow[r, "\varphi"] \arrow[d, "\iota"] &
	\faktor{R'}{{\mathfrak m'}^{\beta}\cdot {\mathfrak C_{R'}}}
	\arrow[d, "\iota' "] \\
	\faktor{k[[t]]}{\mathfrak m^{\beta} \cdot 
    \mathfrak C_R}
	\arrow[r, "\widetilde{\varphi}"] &
	\faktor{k[[t]]}{{\mathfrak m'}^{\beta}
    \cdot {\mathfrak C_{R'}}}
	\end{tikzcd} $$
commutes, and as the vertical maps are injective,
we conclude that
	$$ y_i = x_i'
	\pmod {{\mathfrak m'}^{\beta} \cdot 
    \mathfrak C_{R'} \cdot k[[t]]}. $$
Since $x_i' \in R'$ and $y_i - x_i' \in 
\mathfrak C_{R'} k[[t]]$,
it follows that $y_i \in R'$.
Also,
since ${\mathfrak m'}^{\beta} \cdot \mathfrak C_{R'}
\subseteq \mathfrak m'^\beta R' \subseteq \mathfrak m'^2$,
by Nakayama's lemma,
$y_1, \ldots, y_n \in R'$ form a set of minimal generators of $\mathfrak m'$.
Hence $\Psi$ induces the desired isomorphism
	$$ \psi: R \longrightarrow R', $$
which modulo $\mathfrak m^{\beta} \cdot \mathfrak C_{R}$,
hence modulo $\mathfrak m^{\beta}$,
agrees with the given $\varphi$.

\medbreak

Next, we treat the case $e(R) = 2$.

By Lemma~\ref{lem:hironaka1}~(1),
$R = k[[x_1, x_2]] \subseteq k[[t]]$ with $v(x_1) = 2$
and $v(x_2)=2 \cdot \delta(R)+1$,
and $(x_1)$ generates a minimal reduction of $\mathfrak m$
with reduction number~$1$.

If $\widetilde R$ is the blowup of $\mathfrak m$ in $R$,
then $\widetilde R = R[\frac {\mathfrak m}{x_1}] = 
R[\frac {x_2}{x_1}] =
k[[x_1, \frac {x_2}{x_1}]]$.
Observe that $v\left(\frac {x_2}{x_1}\right) = 
2 \cdot (\delta(R) - 1) + 1$,
so that $\delta(\widetilde R) = \delta(R) - 1$.

By Lemma~\ref{lem:e-H-mod},
$e(R') = e(R) = 2$ and any preimage $x_1'$ in $R'$ of
$\varphi(x_1)$ generates a minimal reduction of 
$\mathfrak m'$,
similarly with reduction number~$1$.
Let $x_2'$ in $R'$ be a preimage of $\varphi(x_2)$.

All powers of $\mathfrak m$ and $\mathfrak m'$ 
have reduction number~$1$.

We proceed by induction on $\delta(R)$.

\medbreak

First, assume that $\delta(R) = 1$ (and $e(R) = 2$).
Then $\widetilde{R} = \overline{R} = k[[t]]$
and $j \geq 2 \delta(R) + 1 = 3$.
By Lemma~\ref{lem:hironaka1},
$\mathfrak m^{j-1}$ is a common ideal of $R$ and 
$\widetilde{R}$,
and $\mathfrak m'^{j-1}$ is a common ideal of $R'$ and 
$\widetilde{R'}$.
By Lemma~\ref{lem:lifting-blowup},
the isomorphism $\varphi: 
\faktor{R}{\mathfrak{m}^{j-2} \cdot \mathfrak{m}^{3}}
	\longrightarrow \faktor{R'}{{\mathfrak{m}'}^{j-2}
    \cdot {\mathfrak{m}'}^3}$
induces an isomorphism
	$$ \widetilde{\varphi}:
	\faktor{\widetilde{R}}{\mathfrak{m}^{j-1} } =
	\faktor{\widetilde{R}}{\mathfrak{m}^{j-2} 
    \cdot \mathfrak{m}}
    \longrightarrow
	\faktor{\widetilde{R'}}{{\mathfrak{m}'}^{j-2} \cdot \mathfrak{m}'}
	= \faktor{\widetilde{R'}}{{\mathfrak{m}'}^{j-1}}, $$
which restricts to $\varphi$ modulo ${\mathfrak{m}}^{j-1}$
and ${\mathfrak{m}'}^{j-1}$.
Since $\widetilde R = \overline R = k[[t]]$ is regular 
and $j-1 \geq 2$, the isomorphism $\widetilde \varphi$
implies that $\widetilde{R'}$ is regular as well.
Since $R' \subseteq \widetilde {R'}$,
we have that
$k[[t]] = \overline{R'} \subseteq 
\overline{\widetilde {R'}} = \widetilde{R'}$,
and since they are in the same field of fractions,
it follows that $k[[t]] = \overline{R'} = \widetilde{R'}$.
Thus, since $j-1 \ge 2$,
by Nakayama's lemma,
$\widetilde{\varphi}$ lifts
to an isomorphism
$$
\Psi: \overline{R} \longrightarrow \overline{R'}.
$$
Setting $y_i = \Psi(x_i)$ ($i = 1,2$),
we have that $y_i = x_i' \pmod{{\mathfrak{m}'}^{j-1}}$,
and as ${\mathfrak{m}'}^{j-1}$ is a common ideal of 
$R'$ and $\overline{R'}$, $y_i \in R'$,
and as $j-1 \geq 2$,
by Nakayama's lemma,
$y_1$, $y_2$ generate $R'$ over $k$,
implying that $\Psi$ induces the desired isomorphism 
$\psi: R \longrightarrow R'$ in this
case.

\medbreak

Now assume that $\delta(R) > 1$ and still 
$e(R) = 2 = e(R')$.
In this case, we can write 
$R = \faktor{k[[X,Y]]}{(f(X,Y))}$ and
$R' = \faktor{k[[X,Y]]}{(g(X,Y))}$
for some $f(X,Y), g(X,Y) \in k[[X,Y]]$ of total order $2$.
By possibly changing the generators of~$R'$,
we may assume that $\varphi$ takes $X$ to $X$ 
and $Y$ to $Y$ modulo $(X,Y)^{j+1}$.
Setting $\mathfrak M = (X,Y) \subseteq k[[X,Y]]$
and viewing $\varphi$ as an isomorphism
	$$ \varphi: \faktor{k[[X,Y]]}{(f(X,Y))+
    \mathfrak M^{j+1}}
	\longrightarrow
	\faktor{k[[X,Y]]}{(g(X,Y))+\mathfrak M^{j+1}} $$
we get that
$f(X,Y) = U \cdot g(X,Y) - r(X,Y)$
for some $U \in k[[X,Y]]$ and some $r(X,Y) \in \mathfrak M^{j+1}$.
Since the degree of $f(X,Y)$ and $g(X,Y)$ is $2 < j+1$,
necessarily $U$ is a unit.
	After replacing $g(X,Y)$ by $U \cdot g(X,Y)$
	which does not change the ideal,
	we may assume that
		$$ f(X,Y) = g(X,Y) - r(X,Y) $$
	for some $r(X,Y) \in \mathfrak M^{j+1}$
	and $\varphi$ is viewed as $\mathrm{id}$.

	In all subcases below we construct elements
	$a, b \in (X,Y)^{j+1-2\delta(R)}$ and
	a unit $u \in k[[X,Y]]$
	such that
		\begin{equation}\label{eq:transform}
		f(X+a,Y+b) = u \cdot g(X,Y) + s(X,Y) \tag{$*$}
		\end{equation}
	for some $s(X,Y) \in (X,Y)^{j+2}$.
	Hence the base change isomorphism $\pi$,
	given by $\pi(X) = X+a$, $\pi(Y) = Y+b$,
	followed by $\varphi$,
	induces an isomorphism
		$$ \begin{aligned}
		\varphi^{*}: \faktor{k[[X,Y]]}{(f(X,Y)) + \mathfrak M^{j+2}}
		 \longrightarrow \, \, &
		\faktor{k[[X,Y]]}{(u \cdot g(X,Y)) + \mathfrak M^{j+2}} \\
		& = \faktor{k[[X,Y]]}{(g(X,Y)) + \mathfrak M^{j+2}}
		\end{aligned} $$
	of $k$-algebras
	with $\varphi^{*}\pmod{\mathfrak M^{j+1-2\delta(R)}} = \mathrm{id}$,
	hence also an isomorphism
	$\varphi^{*}: \faktor{R}{\mathfrak m^{j+2}}
	\longrightarrow \faktor{R'}{{\mathfrak m'}^{j+2}}$
	with
	$$ \begin{aligned}
		\varphi^{*} \pmod{\mathfrak m^{j+1-2\delta(R)}}
		= \varphi \pmod{\mathfrak m^{j+1-2\delta(R)}}.
		\end{aligned} $$
    Thus, by Lemma~\ref{lem:hironaka2}, $\varphi^{*}$ 
    extends to an isomorphism
		$$ \widetilde{\varphi^{*}}:
		\faktor{\widetilde{R}}{\widetilde{\mathfrak m}^{j}}
		\longrightarrow
		\faktor{\widetilde{R'}}{\widetilde{\mathfrak m'}^{j}} .$$
	where $(\widetilde{R}, \widetilde{\mathfrak m})$ resp.
	$(\widetilde{R'}, \widetilde{\mathfrak m'})$ are the quadratic
	transforms of $(R, \mathfrak m)$ resp. $(R', \mathfrak m')$.

	As $\delta(\widetilde{R}) = \delta(R) -1$, by induction the isomorphism
	$\widetilde{\varphi^{*}}$ induces an isomorphism
	$\Psi: \widetilde{R} \longrightarrow \widetilde{R'}$ 
    such that
	    $$ \Psi = \widetilde{\varphi^{*}}:
		\faktor{\widetilde{R}}{\widetilde{\mathfrak m}^{j-2 \cdot
	    (\delta(R)-1)}}
		\longrightarrow
		\faktor{\widetilde{R'}}{ 
        \widetilde{\mathfrak m'}^{j-2 \cdot
	    (\delta(R)-1)}}. $$
	By Lemma~\ref{lem:hironaka2},
	$\Psi$, $\widetilde{\varphi^{*}}$ and $\varphi$
	restrict to the same isomorphism 
		$$ \faktor{R}{\mathfrak m^{j+1-2 \delta(R)}}
	    \longrightarrow
		\faktor{R'}{{\mathfrak m'}^{j+1-2 \delta(R)}}. $$
	This implies that
	    $$ \begin{aligned}
	    \Psi(X) \cong \varphi(x) \cong X
	    \!\!\pmod{(g(X,Y)) + (X,Y)^{j+1-2 \delta(R)}}, \\
	    \Psi(Y) \cong \varphi(y) \cong Y
	    \!\!\pmod{(g(X,Y)) + (X,Y)^{j+1-2 \delta(R)}},
	    \end{aligned} $$
	so that $\Psi(X), \Psi(Y) \in \mathfrak m' 
    \subseteq R'$ generate $\mathfrak m'$,
	hence they generate $R'$ as an analytic $k$-algebra.
	Therefore, 
	$\Psi$ restricts to an isomorphism
		$$ \psi : R \longrightarrow R' $$
	with the desired properties
	(as $\mathfrak m^{j+1-2 \delta(R)}$ is a common ideal
	of $R$ and $\widetilde{R}$ and
	${\mathfrak m'}^{j+1-2 \delta(R)}$ is a common ideal
	of $R'$ and $\widetilde{R'}$).

	\smallskip

	Thus, in all cases below it suffices to get to the
	conditions in ($*$)
	in the set-up of $e(R) = 2$
	and $f(X,Y) = g(X,Y) \textcolor{brown}{-} r(X,Y)$ for some $r(X,Y) \in (X,Y)^{j+1}$.
	We shorten $\delta(R)$ to $\delta$.

	First, suppose that $\mathrm{char}(k) \neq 2$.
	We may assume that $x_1 = t^2 \cdot \alpha$
	where $\alpha = 1 + \alpha_1 \cdot t + \cdots $.
	As over the ring $\left(\faktor{\overline{R}}{
	\overline{\mathfrak m}}\right)[T] = k[T]$,
	the polynomial $T^2 - \overline{\alpha} = T^2 - 1$ is
	separable with $1$ being one root, by Hensel's Lemma
	(\cite[Theorem 7.3]{Ei}, see also
	\cite[proof of Theorem 3.1]{HMM}),
	we may change the parameter~$t$ of $\overline{R}$
	in such a way that $x_1 = t^{2}$.
	As $c(R) = 2 \delta < v(x_2) = 2 \delta +1$,
	we may replace $x_2$ by $t^{2\delta+1}$ by~\cite[Corollary 3.3]{HHMM}.
	Thus $R = k[[t^2, t^{2\delta+1}]]$
	and $f(X,Y) = Y^2-X^{2\delta+1}$
	(cf. also~\cite[Chapter V, \S 1 for $k = \C$]{Za}).

	Suppose in addition to $\mathrm{char}(k) \neq 2$
	that also $\mathrm{char}(k) \not = 2\delta+1$.
	As $j \geq 2\delta+1$, we may write $r(X,Y)$ from the set-up as
		$$ r(X,Y) = X^{2\delta} \cdot A +Y \cdot B $$
	with $A \in (X, Y)^{j+1-2\delta}$ and 
    $B \in (X,Y)^{j}$.
	Set $a = - \frac {A}{2 \delta + 1}$, 
    $b = \frac {B}{2}$,
	$u = 1$. Then
    \allowdisplaybreaks
	$$ \begin{aligned}
	f\left(X + a, Y + b\right)
	&=
	\left(Y + \frac {1}{2} \cdot B\right)^2
	- \left(X - \frac {1}{2\delta+1} 
    \cdot A\right)^{2\delta+1} \\
	&=
	Y^2 + Y \cdot B + \frac {1}{4} \cdot B^2 - X^{2\delta+1} + X^{2\delta} \cdot A + C \\
	& = Y^2 - X^{2\delta+1} + r(X,Y) + 
    \frac {1}{4} \cdot B^2 + C \\
	& = g(X,Y) + \frac {1}{4} \cdot B^2 + C
	\end{aligned} $$
	with $C \in (X,Y)^{2\delta-1+2j+2-4\delta} =
	(X,Y)^{2j+1-2\delta} \subseteq (X,Y)^{j+2}$ and
	$B^2 \in (X,Y)^{2j} \subseteq (X,Y)^{j+2}$.
	Hence conditions ($*$) are satisfied,
	so the theorem is proved in this case.

	\smallbreak

	Now suppose that $\mathrm{char}(k) = 2\delta+1$.
	As $j \geq 2\delta+1$,
	we may write $r(X,Y)$ from the set-up as
		$$ r(X,Y) = A \cdot Y + B \cdot X^{j+1} + C $$
	for some $A \in (X,Y)^{j}$, $B \in k$ and $C \in (X, Y)^{j+2}$.
	Since the characteristic of $k$ is not~$2$,
	by Hensel's Lemma there exists 
    a power series $L \in X \cdot k[[X]]$ such that
	$$
	(1 + L)^2 = 1 + B \cdot X^{j - 2 \delta}.
	$$
	If $B = 0$, then $L = 0$,
	otherwise,
	by expansion of the square we have 
    $1 + L \cdot (2 + L) = 1 + B \cdot
	X^{j - 2 \delta}$,
	so that necessarily the order of $L$ is $j - 2 \delta$.
	Set $a = 0$, $b = \frac{1}{2} \cdot A + Y \cdot L 
    + \frac{1}{2} \cdot A \cdot
	L \in (X,Y)^{j+1 - 2\delta}$,
	$u = 1 + B \cdot X^{j-2\delta}$.
	Then
		$$ \begin{aligned}
		f(X+&a, Y + b) =
		f\left(X, \left( Y + \frac{1}{2} \cdot A\right)\cdot (1 + L) \right) \\
		&=
		\left( Y + \frac{1}{2} \cdot A\right)^2 \cdot 
        (1 + L)^2 - X^{2\delta+1} \\
		&=
		\left( Y^2 + A \cdot Y + \frac{1}{4} 
        \cdot A^2\right)
		\cdot (1 + B \cdot X^{j - 2 \delta}) \\
		&\hskip 3em
		- X^{2\delta+1} \cdot 
        (1 + B \cdot X^{j - 2 \delta})
	\\
		&\hskip 3em
		+ B \cdot X^{j+1} \cdot 
        (1 + B \cdot X^{j - 2 \delta})
		- B^2 \cdot X^{2j+1-2\delta}
	\\
		&=
		\left( Y^2 - X^{2\delta+1} + A \cdot Y + 
        B \cdot X^{j+1} \right) 
		\cdot (1 + B \cdot X^{j - 2 \delta}) \\
	&\hskip 3em
		+ \frac{1}{4} \cdot A^2 \cdot (1 + 
        B \cdot X^{j - 2 \delta})
		- B^2 \cdot X^{2j+1-2\delta},
		\end{aligned} $$
and since $A^2, X^{2j+1-2\delta} \in (X,Y)^{j+2}$,
the last expression is of the form
$u \cdot \left( f(X,Y) + r(X,Y)\right) 
= u \cdot g(X,Y)$ modulo $(X,Y)^{j+2}$.
	Thus, conditions ($*$) are satisfied,
	and the theorem is also proved in this case.

\smallbreak

It remains to prove the theorem for $e(R) = 2$
in case $\mathrm{char}(k) = 2$.
By~\cite[Proposition 3.4]{Ng20},
any such curve is
parameter--equivalent, and thus by ~\cite[Proposition
1.2.10]{Ng13} contact--equivalent, to a curve defined 
by $f(X,Y) = Y^2+ X^{2\delta+1}$ or
by $f(X,Y) = Y^2 + Y \cdot X^{i} + X^{2\delta+1}$
for some $\delta < i \leq 2 \delta$
(resp. apply~\cite[page 566]{KS}).

First, we deal with the case $f(X,Y) = Y^2+X^{2\delta+1}$ and write
	$$ r(X,Y) = a X^{2\delta} + B Y^2 + C $$
for some $a$ homogeneous of degree $j+1-2\delta$,
$B$ homogeneous of degree $j-1$ and
$C \in (X,Y)^{j+2}$.
(Note that $aX^{2\delta} + B Y^2$ represents an 
arbitrary homogeneous element of $(X,Y)^{j+1}$ of 
degree $j+1$ as $j \ge 2\delta + 1$.)
We have $(X + a)^{2\delta+1} = X^{2\delta+1} 
+ a X^{2\delta} + a'$ for some 
$a' \in (X,Y)^{2(j+1-2\delta) + 2\delta-1}
\subseteq (X,Y)^{j+2}$
as $j \ge 2\delta+1$,
and that $u = 1+B$ is a unit in $k[[X,Y]]$.
With $b = 0$
and $a'' = B \cdot (X^{2 \delta + 1} + r(X,Y) + C + a') \in (X,Y)^{j+2}$
we have
$$
\begin{aligned}
f(X+a, Y+b)
&= Y^2 + (X + a)^{2\delta+1} \\
&= Y^2 + X^{2\delta+1} + a X^{2\delta} + a' \\
&= (1 + B) \cdot Y^2 + X^{2\delta+1} + a X^{2\delta} 
+ B Y^2 + C + C + a' \\
&= (1 + B) \cdot Y^2 + X^{2\delta+1} + r(X,Y) + C + a' \\
&= (1 + B) \left(Y^2 + X^{2\delta+1} + r(X,Y) + C + a'\right)
+ a'' \\
&=
u \cdot g(X,Y) + D,
\end{aligned}
$$
where $D = (1+B)(C+a') + a'' \in (X,Y)^{j+2}$.
Thus, conditions ($*$) are satisfied,
and the theorem is also proved in this case.

\smallbreak

Finally, we deal with the remaining case of 
$\mathrm{char}(k) = 2$ with
    $$f(X,Y) = Y^2 + Y \cdot X^{i} + X^{2\delta+1} $$
and $\delta < i \le 2 \delta$,
by carefully evaluating the techniques of~\cite{KS}.
Write
	$$ r(X,Y) = A Y X^j + B X^{j+1} + C Y^2 + D $$
for some $A, B \in k$,
some homogeneous $C \in (X,Y)^{j-1}$ of degree $j-1$
and some $D \in (X,Y)^{j+2}$.
(Note that $A Y X^j + B X^{j+1} + C Y^2$
represents an arbitrary homogeneous element of $(X,Y)^{j+1}$
of degree $j+1$.)
Observe that $1 + A X^{j-i}$ is a unit in $k[[X,Y]]$
whose inverse $v$ satisfies
$v - 1 \in \left(X^{j-i}\right) \subseteq \left(X^{j-2 \delta}\right)$
as $i \leq 2 \delta$.
Let $B' = A+B$ if $i = 2 \delta$ and $B' = B$ otherwise.
Then $1 + B' X^{j-2 \delta}$ is a unit in $k[[X,Y]]$,
so since $2 (\delta-i) + 1$ is odd,
there exists a unit $w \in k[[X,Y]]$ such that $w^{2(\delta-i)+1}
= 1 + B' X^{j-2 \delta}$ and $w-1 \in  \left(X^{j-2 \delta}\right)$.
Let $z$ be the inverse of $1 + C$.
Set
    	$$ \begin{array} {l c l}
    	a & = & (v-1) \cdot X + (w - 1) \cdot v \cdot X
	= \left( v \cdot w - 1\right) \cdot X, \\
    	b & = & \left(v^{i+1} - 1\right) \cdot Y +
	\left(w^i - 1\right) \cdot v^{i+1} \cdot Y
	= \left(v^{i+1} \cdot w^{i} - 1 \right) \cdot Y, \\
    	u & =  & v^{2i+2} \cdot w^{2i} \cdot z,
    	\end{array} $$
with $a, b \in (X,Y)^{j+1 - 2 \delta}$ and $u$ a unit.
Then
	$$ \begin{aligned}
	v^{2 (\delta -i) - 1} w^{2 (\delta-i) + 1} X^{2 \delta + 1}
	& = \left( 1 + A X^{j-i} \right)^{2(i-\delta)+1}
	\left( 1 + B' X^{j-2 \delta}\right) X^{2 \delta + 1} \\
	& = X^{2 \delta + 1} + A X^{j-i+2 \delta + 1} + B' X^{j+1} + d
	\end{aligned} $$
for some $d \in (X,Y)^{(j-i)+(j-2\delta) + (2\delta + 1)}
= (X,Y)^{j-i+j + 1} \subseteq (X,Y)^{j+2}$.
If $i = 2 \delta$,
the above is $X^{2 \delta + 1} + B X^{j+1} + d$,
and otherwise the above equals
$X^{2 \delta + 1} + B X^{j+1} + d'$,
for some $d' \in (X,Y)^{j+2}$.
In all cases,
$$
v^{2 (\delta -i) - 1} w^{2 (\delta-i) + 1} X^{2 \delta + 1}
= X^{2 \delta + 1} + B X^{j+1} + d''
$$
for some $d'' \in (X,Y)^{j+2}$.
As $i > \delta > 1$ we have
	$$
	d''' = C \left(Y X^i + A Y X^j + X^{2 \delta + 1} + B X^{j+1} + d''\right) + d''
	\in (X,Y)^{j+2}.
	$$
Then
	$$
	\begin{aligned}
	f(&X+a, Y+b)
	= f(X + (vw - 1) \cdot X, Y+ (v^{i+1} w^{i} - 1) \cdot Y) \\
	&= f(vwX, v^{i+1} w^i Y) \\
	&= v^{2i+2} w^{2i} Y^2 + v^{i+1} w^i Y v^i w^i X^i
	+ v^{2\delta +1} w^{2\delta+1} X^{2 \delta + 1} \\
	&= v^{2i+2} w^{2i} \left(Y^2 + v^{-1} Y X^i
	+ v^{2\delta +1- 2i - 2} w^{2\delta+1- 2i} X^{2 \delta + 1} \right)\\
	&= v^{2i+2} w^{2i} \left(Y^2 + (1 + A X^{j-i}) Y X^i
	+ X^{2 \delta + 1} + B X^{j+1} + d''
	\right)\\
	&= v^{2i+2} w^{2i} z(1 + C)\left(Y^2 + Y X^i + A Y X^j
	+ X^{2 \delta + 1} + B X^{j+1} + d''
	\right) \\
	&= u \left(Y^2 + C Y^2 + Y X^i + A Y X^j
	+ X^{2 \delta + 1} + B X^{j+1} + d'''
	\right) \\
	&= u \left(f(X,Y) + r(X,Y) + D + d''' \right) \\
	&= u \cdot g(X,Y) + u(D+d''').
	\end{aligned}
	$$
Thus, conditions ($*$) are satisfied,
and the theorem is also proved in this final case.
\end{proof}

\begin{notiz}
As a general bound in $\delta(R)$, the bound
$2 \cdot \delta(R)+1$ in theorem~\ref{thm:hironaka} is
sharp as the curves given by $k[[t^2, t^{2n+1}]]$ show.

Involving more invariants, better bounds may be obtained.
Evaluating the above arguments carefully gives (in any
characteristic and for any multiplicity) that the bound
$\frac {6 \cdot \delta(R)}{e(R)} +1$ works. For curves
of multiplicity $e(R) = 2$ this recovers Hironaka's
original bound.
\end{notiz}

\begin{notiz}
The assumption that $k$ be algebraically closed is only needed
for the case $e =2$ and $\mathrm{char}(k) = 2$ as this is
required for the classification in~\cite{Ng20}
(resp.~\cite{KS}). In all other cases it would be
sufficient to assume that the residue class field is
infinite (to guarantee the existence of minimal reductions)
and that $\faktor{R}{\mathfrak m} =
\faktor{\overline{R}}{ \overline{\mathfrak m}}$ and
$\faktor{R'}{\mathfrak m'} = \faktor{\overline{R'}}{
\overline{\mathfrak m'}}$. The description of
algebroid curves of multiplicity 2 as
$R = k[[t^2,t^{2n+1}]]$ in any characteristic other
than $2$ only needs the existence of two distinct
square roots of $1$ (by Hensel's Lemma).
\end{notiz}
\bibliographystyle{plain}

\end{document}